\documentclass[11pt]{amsart}

\usepackage{latexsym,rawfonts}
\usepackage{amsfonts,amssymb}
\usepackage{amsmath,amsthm,enumerate}

\usepackage{microtype}

\usepackage[plainpages=false,hypertexnames=false]{hyperref}
\usepackage{graphicx}
\usepackage{longtable}

\newcommand{\R}{\mathbb{R}}

\newcommand{\psum}{{+_{\negthinspace\kern-2pt p}}\,}
\newcommand{\qsum}[1]{{+_{\negthinspace\kern-2pt #1}}\,}
\newcommand{\dpsum}{{\tilde+_{\negthinspace\kern-1pt p}}\,}
\newcommand{\dqsum}[1]{{\tilde+_{\negthinspace\kern-1pt #1}}\,}
\newcommand{\lsub}[1]{\hskip -1.5pt\lower.5ex\hbox{$_{#1}$}}

\numberwithin{equation}{section}

\newtheorem{theo}{Theorem}[section]

\newtheorem{lem}[theo]{Lemma}

\newtheorem{rem}[theo]{Remark} \theoremstyle{definition}

\begin{document}

\title[Capillary ellipsoid and curvature problems]{Capillary John ellipsoid theorem with applications to capillary curvature problems}
\author[J. Hu]{Jinrong Hu}
\address{Institut f\"{u}r Diskrete Mathematik und Geometrie, Technische Universit\"{a}t Wien, Wiedner Hauptstrasse 8-10, 1040 Wien, Austria
 }
\email{jinrong.hu@tuwien.ac.at}

\author[Y. Hu]{Yingxiang Hu}
\address{School of Mathematical Sciences, Beihang University, Beijing 100191, China}
\email{huyingxiang@buaa.edu.cn}

\author[B. Yang]{Bo Yang}
\address{Department of Mathematical Sciences, Tsinghua University, Beijing 100084, P.R. China}
\email{ybo@tsinghua.edu.cn}
\address{Institut f\"{u}r Diskrete Mathematik und Geometrie, Technische Universit\"{a}t Wien, Wiedner Hauptstra{\ss}e 8-10, 1040 Wien, Austria}
\email{bo.yang@tuwien.ac.at}

\begin{abstract}
In this paper, we apply a capillary John ellipsoid theorem for capillary convex bodies in the Euclidean half-space $\overline{\mathbb{R}^{n+1}_{+}}$. This theorem yields a non-collapsing estimate for capillary hypersurfaces, which provides a new approach to obtaining $C^{0}$ estimates for solutions to some capillary curvature problems (including the capillary $L_{p}$ Christoffel-Minkowski problem and the capillary $L_{p}$ curvature problem), based on the corresponding gradient estimates. As an application, we study the capillary $L_{p}$ dual Minkowski problem. A gradient estimate, together with the non-collapsing estimate and a $C^2$ estimate, yields existence for every $1<p<q<\infty$ with positive smooth even data and an acute contact angle. The same estimates also yield the eigenvalue problem for $p=q>1$, with uniqueness up to dilation. We further obtain existence and uniqueness for $p>q$ without an upper restriction on $q$.

\end{abstract}
\keywords{Capillary hypersurface, capillary ellipsoid, capillary $L_p$ dual Minkowski problem}

\makeatletter
\@namedef{subjclassname@2020}{\textup{2020} Mathematics Subject Classification}
\makeatother

\subjclass[2020]{35J66, 52A20}
\thanks{J. Hu was supported by the Austrian Science Fund (FWF):
10.55776/ESP1358925. Y. Hu was supported by the National Key Research and Development Program of China 2021YFA1001800. B. Yang was supported by China Postdoctoral Science Foundation No.~2024M751605.}

\maketitle

\baselineskip18pt

\parskip3pt

\section{Introduction}
Capillary geometry has emerged as a key tool for modeling liquid interfaces in contact with solid boundaries, generating substantial research interest (cf. \cite{HWYZ24, KLS25, MWWX25,WWX24}). These investigations naturally lead to the study of capillary hypersurfaces that exhibit a prescribed contact angle $\theta \in (0, \frac{\pi}{2})$ along their supporting boundary. Such hypersurfaces provide a geometric framework for studying free boundary problems and surface-tension phenomena. At the intersection of convex geometry, capillary geometry, and boundary value problems for partial differential equations lies the capillary Minkowski problem. This problem asks whether a given function defined on a spherical cap can be realized as the Gauss curvature of a convex capillary hypersurface. Introduced and solved by Mei, Wang and Weng \cite{MWW25a}, it may be regarded as the capillary analogue of the classical Minkowski problem.

The introduction of the capillary Minkowski problem has generated substantial research interest and spurred rapid development in the field. In particular, several important generalizations have been proposed and systematically investigated. The $L_p$ capillary Minkowski problem, which seeks a convex capillary hypersurface with prescribed $L_p$ surface area measure, has been extensively studied for various ranges of the parameter $p$, as discussed in \cite{DGLL26,HHI25,HI25a,MWW25b}. As a further extension, the Orlicz capillary Minkowski problem has been investigated in \cite{LL26,WZ25}, providing a Robin-type boundary analogue of the classical Orlicz-Minkowski problem. More recently, the capillary Christoffel--Minkowski-type problem, concerning the prescribed area measures of capillary convex bodies, has also attracted attention (see, e.g., \cite{HI25,HIS25,MWW25,MWW25c}). In addition to these Minkowski-type problems, the prescribed capillary curvature problem, which addresses the existence of convex capillary hypersurfaces with prescribed Weingarten curvature, has also been explored (cf. \cite{HI26,MWW25}). Collectively, these developments underscore that capillary geometry has rapidly evolved into a vibrant research area. In particular, the results on curvature problems in Euclidean space  (see, e.g., \cite{M897, M903, A38, A39,  N53, CY76, P78,L93,GG02,GM03,HLYZ10} and references therein) have been extended to the Euclidean half-space.

The key to solving curvature problems lies in obtaining regularity estimates for the solutions, in which the classical John's theorem for convex bodies in $\mathbb{R}^{n+1}$ plays a crucial role (see, e.g., \cite[Chap. 10]{S14}).
In this paper, we apply a capillary John ellipsoid theorem (see Theorem \ref{Thm-ce}). Using this theorem, we obtain a new non-collapsing estimate for capillary hypersurfaces as follows.
\begin{lem}\label{TREQ}
 Let $\theta\in (0,\frac{\pi}{2})$. Let $\Sigma$ be an even, smooth, strictly convex $\theta$-capillary hypersurface with positive capillary support function $h$. If $h$ satisfies
\begin{equation}\label{GY}
      \frac{|\nabla h|^2}{h^{\gamma}} \leq N (\max_{\mathcal{C}_{\theta}} h)^{2-\gamma}, \quad \text{in \ $\mathcal{C}_{\theta}$}
\end{equation}
for some positive constants $\gamma$ and $N$. Then the following non-collapsing estimate holds
\begin{equation}\label{maxmin}
    \frac{\max_{\mathcal{C}_{\theta}}h}{\min_{\mathcal{C}_{\theta}}h}\leq C,
\end{equation}
where the constant $C$ depends only on $n,\theta,\gamma$ and $N$.
\end{lem}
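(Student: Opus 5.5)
Since both \eqref{GY} and \eqref{maxmin} are invariant under dilation, we normalize $\max_{\mathcal{C}_\theta} h = h(\xi_0)=1$ for some $\xi_0\in\mathcal{C}_\theta$. The goal is then a positive lower bound on $\min_{\mathcal{C}_\theta} h$ depending only on $n,\theta,\gamma,N$.

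\textbf{Step 1: $h$ is comparable to $1$ near $\xi_0$.} Set $u=h^{1-\gamma/2}$ when $\gamma<2$; use $\log h$ when $\gamma=2$ and $h^{-(\gamma/2-1)}$ when $\gamma>2$. In each case \eqref{GY} gives $|\nabla u|\le c(\gamma)\sqrt N$ on $\mathcal{C}_\theta$. We integrate this along spherical geodesics from $\xi_0$; the cap $\mathcal{C}_\theta$ is geodesically convex with diameter bounded in terms of $\theta$. For $\gamma<2$ we get $h\ge 1/2$ on the geodesic ball $B_\delta(\xi_0)\cap\mathcal{C}_\theta$, with $\delta=\delta(\gamma,N)$. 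For $\gamma>2$ the bound runs the wrong way, since $u\ge1$ blows up as $h\to0$. Even so, $h^{-(\gamma/2-1)}\le 1+c\sqrt N\,\mathrm{dist}$ already gives a global lower bound on $h$. For $\gamma\ge 2$ the lemma therefore follows directly from the bounded diameter of $\mathcal{C}_\theta$. The substantive case is $\gamma<2$.

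\textbf{Step 2: a large supporting cone in a whole neighbourhood of directions.} Because $\widehat{\Sigma}$ is even and $\theta$-capillary, the capillary John ellipsoid theorem (Theorem \ref{Thm-ce}) supplies a capillary ellipsoid $E$, which we may take centred on $\partial\mathbb{R}^{n+1}_+$ by evenness. It satisfies $E\subset\widehat{\Sigma}\subset c(n,\theta)E$, where the dilation is taken with respect to the center on the boundary hyperplane. Comparing capillary support functions, whose dependence on the body is monotone and positively homogeneous, gives $h_E\le h\le c\,h_E$. It therefore suffices to bound $\max h_E/\min h_E$, that is, to show that the semi-axes of $E$ are comparable. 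Let $a_1\ge\dots\ge a_{n+1}$ be the semi-axes, with $a_{n+1}$ possibly in the $e_{n+1}$ direction. By Step 1, $h_E\ge 1/(2c)$ on a spherical ball of radius $\delta$ in $\mathcal{C}_\theta$, while $h_E\le 1$ everywhere. An ellipsoid whose support function is at least $\epsilon$ on a $\delta$-ball of directions cannot be too flat. Indeed, if the smallest semi-axis $a_{n+1}\to0$, then $h_E$ is small in the direction of that axis and is at most about $a_1|\langle\xi,e\rangle|+\dots$. The capillary translation term contributes only $\cos\theta$ times something bounded.

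\textbf{Step 3: the main obstacle.} The real difficulty is turning "$h_E\ge\epsilon$ on a $\delta$-ball and $h_E\le 1$" into "$\min h_E\ge c'$". A large open set of directions seeing the ellipsoid at distance $\epsilon$ does not by itself prevent a thin axis. For example, a thin disk has large support function away from its normal, and small support function only near the normal. I expect to need the gradient bound once more, now near the minimum point $\xi_1$ of $h$. Using concavity of $u=h^{1-\gamma/2}$ is not available, so instead I would argue as follows. Suppose $m=h(\xi_1)$ is very small. The ellipsoid comparison forces $h_E\lesssim m+a_{n+1}$, and hence $h\lesssim c\,(m+\text{small})$, on a $\delta'$-neighbourhood of $\xi_1$ of size independent of $m$, because a flat ellipsoid has support function $\approx a_1|\xi\cdot\nu|$. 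Then $|\nabla h|$ must be of order $1$ somewhere along a geodesic from $\xi_1$ toward $\xi_0$, at points where $h$ is still small. This would contradict $|\nabla h|^2\le N h^\gamma$ once $h\ll 1$. Making this quantitative, by integrating $|\nabla h^{1-\gamma/2}|\le C$ along the path where $h$ grows from $m$ to order $1$ across a bounded distance, gives the bound $m\ge c(n,\theta,\gamma,N)$. Verifying the flat-ellipsoid support function asymptotics in the capillary setting, including the $\cos\theta\,\langle\xi,e_{n+1}\rangle$-type correction, is where the care lies.
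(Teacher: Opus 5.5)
Your treatment of $\gamma\ge 2$ is correct and matches the paper's. For $0<\gamma<2$, however, the proposal has a genuine gap, in two places.

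\textbf{Step 2 misquotes Theorem~\ref{Thm-ce}.} That theorem does not give a dilation factor $c(n,\theta)$; its factor is $\frac32 R^{\mathrm{out}}_\Sigma/R^{\mathrm{in}}_\Sigma+3$. This cannot be made universal. The comparison bodies $\widehat L(a,b)$ are rotationally symmetric with only two parameters, and the underlying ellipsoid is centred at $-\tau^*E_{n+1}$, not on $\partial\mathbb{R}^{n+1}_+$. Reading $\widehat L(a,b)\subset\widehat\Sigma\subset c\,\widehat L(a,b)$ on the base forces $c\ge R^{\mathrm{out}}_\Sigma/R^{\mathrm{in}}_\Sigma$, which is unbounded for even bodies with long thin bases. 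So before any ellipsoid comparison you must prove $R^{\mathrm{out}}_\Sigma/R^{\mathrm{in}}_\Sigma\le C'(n,\theta,\gamma,N)$, and this is where the paper does real work. It takes a sequence with $\max h_j=1$ and $R^{\mathrm{in}}_j\to0$, passes to a Hausdorff limit $K$ with $h_K(v_0)=0$ at a boundary direction, and uses the hypothesis to get $h_K(v)^{1-\gamma/2}\le(1-\gamma/2)\sqrt N\,d(v,v_0)$. Evenness then confines $K$ to $\{z=0,\ \omega\cdot x'=0\}$, and a boundary curve through $v_0$ shows linear growth of $h_K$, contradicting $t^{2/(2-\gamma)}=o(t)$. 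Nothing in your proposal replaces this step.

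\textbf{Step 3, the heart of the matter, is not carried out, and its sketch contains errors.}
\begin{enumerate}[(a)]
\item The formula $h_E\approx a_1|\xi\cdot\nu|$ is the support function of a segment, not of a flat ellipsoid. For a pancake with thin axis $\nu$ one has $h_E\approx a_{n+1}+a_1\,d(\xi,\nu)$ near $\nu$. So the region where $h$ is comparable to $m$ has radius of order $m/a_1$, not a size independent of $m$.
\item Your quantitative version gives no contradiction. Integrating $|\nabla h^{1-\gamma/2}|\le C$ along a path on which $h$ rises from $m$ to order one over a bounded distance is harmless, since a $C$-Lipschitz function can rise from $0$ to $1$ over distance $1/C$.
\end{enumerate}

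\textbf{The missing ingredient} is a linear lower bound for $h$, with slope comparable to $\max h$, along a geodesic starting at a point $\xi_*$ with $h(\xi_*)\le C\min h$. With that, integration gives $(c\,t)^{1-\gamma/2}\le (h(\xi_*)/\max h)^{1-\gamma/2}+(1-\frac{\gamma}{2})\sqrt N\,t$. Evaluating at a fixed small $t_0(\theta,\gamma,N)$, where $t^{1-\gamma/2}\gg t$, bounds $\max h/\min h$. The paper does exactly this in Case~2 ($a\le b$), along meridians from the pole via \eqref{KI}--\eqref{TRE}, after handling $a>b$ by cone inclusions.

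In the even case you can get this ingredient directly, without any ellipsoid:
\begin{enumerate}[(1)]
\item Since $(0,H_\Sigma)\in\widehat\Sigma$, you have $h(\mathrm{pole})=H_\Sigma$ and $\min h\ge H_\Sigma\cos\theta$.
\item The cylinder bound gives $\max h\le R^{\mathrm{out}}_\Sigma\sin\theta+H_\Sigma$.
\item A point realizing $R^{\mathrm{out}}_\Sigma$ in direction $\omega$ gives $h\ge R^{\mathrm{out}}_\Sigma\sin t$ along the meridian $v(t)=(\sin t\,\omega,\cos t)$, $0\le t\le\theta$, of $S_\theta$.
\end{enumerate}

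Your Step~1 lower bound near the maximum point, and the ``cannot be too flat'' claim in Step~2 (which you yourself retract in Step~3), play no role in any of this.
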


This lemma can be seen as a capillary analogue of \cite[Lem. 3.1]{Gu23} in the even case. The non-collapsing estimate \eqref{maxmin} provides a novel framework for obtaining $C^0$ estimates for solutions to capillary curvature problems such as the capillary $L_p$ Christoffel-Minkowski problem and the capillary $L_p$ curvature problem (see, e.g., \cite{HI26,HI25,MWW25}), based on the associated gradient estimates. The $C^0$ estimate plays a crucial role in deriving higher-order regularity estimates for curvature equations (for further details, see Section \ref{sec4}).

In addition to the capillary curvature problems mentioned above, we consider another class of capillary curvature problems, the capillary $L_p$ dual Minkowski problem in $\overline{\mathbb{R}^{n+1}_{+}}$. This problem asks for a capillary convex hypersurface whose capillary support function $h:\mathcal{C}_\theta \rightarrow (0,\infty)$ satisfies the following fully nonlinear partial differential equation with a Robin boundary condition:
\begin{equation}\label{Mong-Eq}
\left\{
\begin{array}{l@{\ }l}
\det(\nabla^{2}h + hI) = f h^{p-1} (h^{2} + |\nabla h|^{2})^{\frac{n+1-q}{2}}, & \quad  {\rm in} \ \mathcal{C}_{\theta}, \\
\nabla_{\mu}h=\cot \theta \,h, & \quad  {\rm on} \ \partial\mathcal{C}_{\theta},
\end{array}\right.
\end{equation}
where $\mu$ is the unit outward conormal of $\partial\mathcal{C}_{\theta}$ in $\mathcal{C}_{\theta}$.

When $q=n+1$, \eqref{Mong-Eq} corresponds to the capillary $L_p$ Minkowski problem; when $\theta=\pi/2$, it becomes a hemispherical Neumann problem. Its relation to the closed $L_p$ dual Minkowski problem (see, e.g., \cite{HLYZ16, LYZ18, HZ18}) uses reflection across the equator and reflection-compatible data; the endpoint $\theta=\pi/2$ is not included in the estimates proved here. Regarding the solvability of \eqref{Mong-Eq}, Gao \cite{G26a} established the existence and uniqueness of solutions under the conditions $p > q$ and $q\leq1$, where $q\leq 1$ ensures the completion of the interior $C^2$ estimate for \eqref{Mong-Eq}. We remove this restriction on $q$ and also treat $1<p\leq q$.

We consider the following new test function
\[
  Q=\log \sigma_1+Ah+B|\nabla u|^{2}\]
in place of the function
\[
   P_1=\sigma_1+Mh+\frac{1}{2}|\nabla h|^{2}
\]
used in \cite{G26a}, where $\sigma_{1}$ is the sum of the principal radii of curvature of a capillary convex hypersurface and $u=h/\ell$ (where $\ell$ is the capillary support function of the unit spherical cap; see Section \ref{sec2} for its definition). A refined $C^2$ estimate then yields the existence of solutions to the capillary $L_p$ dual Minkowski problem for $p>q$ in $\overline{\mathbb{R}^{n+1}_{+}}$ without requiring $q\leq 1$.

It is worth mentioning that the auxiliary function $Q$ can be employed to yield the required $C^2$ estimate, both in the interior and on the boundary, without any restriction on $q$.  For the $C^0$ estimate of solutions to \eqref{Mong-Eq} with $p>q$, upper and lower bounds can be directly obtained via the maximum principle; see \cite{G26a} for details. However, when $p\leq q$, the $C^0$ estimate is more challenging, as it may require a capillary gradient estimate in higher dimensions. Due to the presence of the gradient term $\nabla h$ on the right-hand side of \eqref{Mong-Eq} and the boundary condition, deriving the gradient estimate becomes significantly more delicate than in \cite{CH25,HI24, HI26, MWW24,MWW25}.
Under \eqref{eq:gamma-condition}, Lemma \ref{gra-esi} provides a gradient estimate for arbitrary $q$ by estimating the term containing $n+1-q$ from both sides. Together with Lemma \ref{TREQ} and the $C^2$ estimate, a two-stage degree argument yields existence for $1<p<q<\infty$. Normalized approximation then settles the critical case $p=q>1$.

The main results are as follows.
\begin{theo}\label{maintheo2}
 Let $n\geq2$ and $\theta \in (0,\frac{\pi}{2}) $. Let $f$ be a positive function on $\mathcal{C}_{\theta}$ that is smooth up to the boundary.
\begin{enumerate}[(i)]
    \item If $1<p<q<\infty$ and $f$ is even, then there exists an even, positive, smooth and strictly convex solution $h$ to Eq. \eqref{Mong-Eq}.
    \item For every fixed $p,q\in\mathbb R$ with $p>q$, there exists a unique, positive, smooth and strictly convex solution $h$  to Eq. \eqref{Mong-Eq}.
    \item If $p=q>1$ and $f$ is even, then there exist a positive, even, smooth, strictly convex solution $h$, unique up to positive dilation, and a unique positive constant $C^*$ such that
\begin{equation}\label{p=q}
\left\{
\begin{array}{l@{\ }l}
\det(\nabla^{2}h + hI) = C^*f h^{p-1} (h^{2} + |\nabla h|^{2})^{\frac{n+1-p}{2}}, & \quad  {\rm in} \ \mathcal{C}_{\theta}, \\
\nabla_{\mu}h=\cot \theta \,h, & \quad  {\rm on} \ \partial\mathcal{C}_{\theta}.
\end{array}\right.
\end{equation}
\end{enumerate}
\end{theo}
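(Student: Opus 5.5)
The proof of Theorem \ref{maintheo2} follows the standard continuity/degree scheme for Monge--Amp\`ere equations with a Robin boundary condition: a priori estimates of every order for \eqref{Mong-Eq}, then a topological argument. We treat the three parts in the order (ii), (i), (iii), since (ii) is the simplest and supplies the starting point for the deformations used later.

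\textbf{Part (ii), $p>q$.} For the $C^0$ estimate we evaluate the equation at a maximum point of $h$ (respectively a minimum point) of $h$. If the point is interior, then $\nabla h=0$ and $\nabla^2h\le 0$ (resp.\ $\ge0$) there, so $h^{n}\ge f h^{p-1}h^{n+1-q}$ (resp.\ $\le$). Because $p>q$, this gives $h^{q-p}\ge \min f$ at the maximum, hence an upper bound, and similarly a lower bound at the minimum. If the extremum lies on $\partial\mathcal C_\theta$, the Robin condition $\nabla_\mu h=\cot\theta\,h$ is incompatible with a boundary extremum. To handle this we pass to $u=h/\ell$, whose boundary condition is homogeneous Neumann, and run the same argument for $u$, as in \cite{G26a}. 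The gradient bound then follows, since a convex body pinched between two balls has $|\nabla h|$ bounded by the outer radius.

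For the $C^2$ estimate we apply the maximum principle to the function $Q=\log\sigma_1+Ah+B|\nabla u|^2$.
\begin{enumerate}[(1)]
\item At an interior maximum we differentiate $\log\det(\nabla^2h+hI)=\log f+(p-1)\log h+\frac{n+1-q}{2}\log(h^2+|\nabla h|^2)$ twice and use concavity of $\log\det$. The troublesome third-order terms coming from $|\nabla h|^2$ on the right-hand side are absorbed by $B\,\sigma^{ij}u_{ki}u_{kj}$. Choosing $B$ first and then $A$ large, the term $A\sum\sigma^{ii}$ dominates, which bounds $\sigma_1$.
\item At a boundary maximum we use $\nabla_\mu u=0$ together with the capillary boundary identities (the second fundamental form of $\partial\mathcal C_\theta$ and the commutator formulas). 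These show that $\nabla_\mu Q$ has a sign unless $\sigma_1$ is bounded, following \cite{MWW25a}.
\end{enumerate}
Evans--Krylov and Lieberman's oblique-derivative theory then give $C^{2,\alpha}$ and higher regularity. Uniqueness comes from the comparison principle applied to $h_1/h_2$ at its maximum, where $p>q$ again gives the right monotonicity. Existence follows by continuity from $f\equiv 1$, where $h=c\,\ell$ is an explicit solution. Equivalently, the degree equals $\pm1$, using linearized invertibility at the unique solution.

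\textbf{Part (i), $1<p<q$.} Here the maximum principle gives only the wrong-direction bounds, so we use the gradient estimate of Lemma \ref{gra-esi} in the form \eqref{GY}, with $\gamma$ and $N$ depending on $p$, $q$, $n$, $\theta$ and $f$. Lemma \ref{TREQ} then gives $\max h/\min h\le C$ for even solutions.

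To fix the scale, we evaluate the equation at the maximum point of $u$, where it reads $\lesssim h^{n-p+1-(n+1-q)}=h^{q-p}$. This gives $h_{\max}^{\,q-p}\gtrsim \min f$, a lower bound on $\max h$. The analogous evaluation at the minimum gives an upper bound on $\min h$. Combined with the ratio estimate, these yield two-sided $C^0$ bounds.

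The $C^2$ estimate above did not use the sign of $p-q$, so it applies unchanged. With these estimates we compute the degree in the even class through two homotopies.
\begin{enumerate}[(1)]
\item Deform $f$ to $1$ with $p$, $q$ fixed.
\item Deform $q$ down to some $q'<p$ along a path where the estimates stay uniform, or move the exponents to a case where uniqueness and nondegeneracy are known (e.g.\ $p=q$ with $f=1$, perturbed).
\end{enumerate}
The degree is then nonzero, which gives existence. The main obstacle is this degree computation, in particular making sure that the $C^0$ bounds survive uniformly along the exponent homotopy; a secondary difficulty is verifying that \eqref{GY} holds with constants independent of the homotopy parameter.

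\textbf{Part (iii), $p=q>1$.} We solve the equation for $q_k\downarrow p$ using (i), and normalize by $\tilde h_k=h_k/\max h_k$. The equation becomes $\det(\nabla^2\tilde h+\tilde hI)=c_k f\tilde h^{p-1}(\tilde h^2+|\nabla\tilde h|^2)^{(n+1-q_k)/2}$ with $c_k=(\max h_k)^{q_k-p}$.

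Lemma \ref{gra-esi} and Lemma \ref{TREQ} give uniform bounds on $\tilde h_k$. Evaluating the equation at the maximum and minimum points of $u$ bounds $c_k$ above and below. The $C^2$ estimate then lets us pass to a subsequence converging to a solution $(h,C^*)$ of \eqref{p=q}.

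Uniqueness follows by comparison. Given two solutions, dilate them so that $h_2\ge h_1$ with contact at a point. At the maximum of $h_1/h_2$ (interior, or boundary via the Neumann form in $u$), the equation with $p=q$ is homogeneous of degree $0$ in scaling, which forces $C_1^*\le C_2^*$; by symmetry $C_1^*=C_2^*$. The strong maximum principle then gives $h_1=c\,h_2$.
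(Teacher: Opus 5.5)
Your a priori estimates follow the paper closely: extrema of $u=h/\ell$ for the scale, the cylinder bound for $|\nabla h|$, the test function $Q$, and Lemma~\ref{gra-esi} together with Lemma~\ref{TREQ} for the ratio in the even case. Your limiting argument for (iii) also works once (i) is available. The gap is the degree computation in part (i), which you yourself call the main obstacle: it is left open, and both routes you suggest fail.

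\emph{Crossing $q=p$.} Any path from $q>p$ to some $q'<p$ must cross $q=p$, where the equation is invariant under $h\mapsto ch$. There the solution set is either empty or a noncompact ray, and nearby the solutions leave every bounded set. For fixed $p,f$, the normalization of part (iii) gives $(\min h)^{p-q}\to C^*(f)$ as $q\to p$, so $\min h\to0$ or $\infty$ unless $C^*(f)=1$. Hence no fixed bounded $\mathcal O$ keeps zeros off its boundary along such a path, and the degree cannot be carried across $q=p$.

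\emph{Landing at $p=q$.} This does not help either. There $F(ch)=c^nF(h)$, so $\phi=h$ lies in the kernel of the linearized pair. The zero is therefore neither isolated nor nondegenerate, and perturbing back into $q>p$ presupposes the degree you want.

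\emph{The base point $f\equiv1$.} This choice has no explicit solution. For $h=c\ell$ one has $\det b=c^n$, while the right-hand side is $c^{n+p-q}\ell^{p-1}(\ell^2+|\nabla\ell|^2)^{(n+1-q)/2}$, which is nonconstant unless $p=1$ and $q=n+1$. The same slip affects the starting point of your continuity method in (ii), where it is harmless: start instead from $f_0=\ell^{1-p}(\ell^2+|\nabla\ell|^2)^{(q-n-1)/2}$, for which $\ell$ is a solution.

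The missing idea is to stay inside $\{1<p<q\}$ and deform to a capillary $L_{p_0}$-Minkowski problem where uniqueness and nondegeneracy can actually be proved.

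\emph{The homotopy.} Fix $1<p_0<\min\{p,n+1\}$ and $\gamma=(p_0-1)/(\max\{q,n+1\}-1)$. First move $(f,q)$ linearly to $(\ell^{1-p_0},n+1)$ with $p_0$ fixed; then move $p_0$ to $p$ with $(f,q)$ fixed. Along both paths $2(p-1)-\gamma(q-1)\geq p_0-1$ and $q-p$ stays bounded below. So Lemmas~\ref{gra-esi} and \ref{TREQ}, and the $C^0$, $C^2$ and higher estimates, are uniform on one bounded open set of even functions.

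\emph{The base point.} There $h=\ell$ solves the equation because $b(\ell)=I$, and it is the unique even solution by \cite[Thm.~5.4]{MWW25}. It is also nondegenerate. A kernel element $\phi=\ell v$ of $\phi\mapsto\Delta\phi+n\phi-(p_0-1)\phi/\ell$ with $\phi_\mu=\cot\theta\,\phi$ satisfies $v_\mu=0$, $\int\phi=0$ and $\int\ell^2|\nabla v|^2=(n+1-p_0)\int\ell v^2$; here $p_0<n+1$ is used. Hence $\int\phi(\Delta\phi+n\phi)=(p_0-1)\int\ell v^2$. The capillary mixed-volume inequality (as in the proof of \cite[Lem.~5.6]{MWW25}) makes this $\leq0$, so $v=0$ and the degree is $\pm1$.

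\emph{Consequence for (iii).} Your (iii) inherits this gap, since it approximates by solutions from (i). It becomes independent of (i) if you approximate from the other side, as the paper does. Use exponents $(p+\varepsilon,p)$ and the unique solutions of (ii), which are even by uniqueness and reflection. Then normalize by $\min h_\varepsilon$ and apply Lemma~\ref{gra-esi} with $\gamma=1$. (Minor: your constant there should be $c_k=(\max h_k)^{p-q_k}$, not $(\max h_k)^{q_k-p}$.)
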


\begin{rem}
The estimates in (i) are uniform on compact subsets of
$1<p<q<\infty$ and $\theta\in (0,\frac{\pi}{2})$, with uniform positive lower
and smooth norm bounds for the data. They need not remain uniform as
$p\downarrow1$, $q-p\downarrow0$ or $q\to\infty$; part (iii) uses normalization. For $p>q$, Gao \cite[Thm.~1.1]{G26b} also recently
obtained existence and uniqueness in dimensions $n\geq3$, without an upper restriction on $q$, using the test function $P_2=\log\sigma_1+Ah+B|\nabla h|^2+\log\sigma_{n-1}$.
\end{rem}
The structure of this paper is as follows. In Section \ref{sec2}, we recall some basic facts about capillary hypersurfaces. In Section \ref{sec3}, we state a capillary John ellipsoid theorem, which serves as the foundation for a new non-collapsing estimate developed in Section \ref{sec4}. In Section \ref{sec5}, the proof of Theorem \ref{maintheo2} is presented.

\section{Basics of capillary geometry}\label{sec2}
Let $\{E_i\}_{i=1}^{n+1}$ denote the standard orthonormal basis of $\mathbb{R}^{n+1}$, and define the Euclidean half-space as
\begin{equation*}
\mathbb{R}^{n+1}_{+} = \{ y \in \mathbb{R}^{n+1} : \langle y, E_{n+1} \rangle > 0 \}.
\end{equation*}
Let $\Sigma \subset \overline{\mathbb{R}^{n+1}_{+}}$ be a smooth, properly embedded, compact hypersurface with boundary $\partial \Sigma \subset \partial \mathbb{R}^{n+1}_{+}$. A hypersurface $\Sigma$ is called a capillary hypersurface with constant contact angle $\theta \in (0, \pi)$ if
\begin{equation}
\langle \nu, e \rangle= \cos(\pi - \theta) \ \text{along $\partial \Sigma$},\notag
\end{equation}
where $e := -E_{n+1}$ and $\nu$ denotes the outward unit normal of $\Sigma$ in $\mathbb{R}^{n+1}$. If we additionally let $\mu$ be the unit outward conormal of $\partial\Sigma$ in $\Sigma$ and $\bar{\nu}$ be the unit normal of $\partial\Sigma$ in $\partial\mathbb{R}^{n+1}_{+}$ such that \{$\nu,\mu$\} and \{$\bar{\nu},e$\} have the same orientation in the normal bundle of $\partial\Sigma\subset\overline{\mathbb{R}^{n+1}_{+}}$, then the following relation holds:
\begin{equation}\label{normal transform}
\left\{\begin{aligned}
    \nu=&-\cos\theta\, e+\sin\theta\, \bar{\nu},\\
	\mu=&\sin\theta \,e+\cos\theta \,\bar{\nu}.
	\end{aligned}\right.
\end{equation}

A compact convex set $\widehat{\Sigma}\subset\overline{\mathbb{R}^{n+1}_{+}}$ with nonempty interior is called a capillary convex body if its boundary consists of a strictly convex capillary hypersurface $\Sigma$ and a portion of the supporting hyperplane $\partial \mathbb{R}^{n+1}_{+}$.

For any $r > 0$, the capillary spherical cap of radius $r$ with constant contact angle $\theta \in (0,\pi)$ is defined as
\begin{equation} \label{s2:def-C_theta}
\mathcal{C}_{\theta,r} = \{\zeta \in \overline{\mathbb{R}^{n+1}_{+}} : |\zeta-r \cos \theta \, e| = r \}.\notag
\end{equation}
We write $\mathcal{C}_{\theta}=\mathcal{C}_{\theta,1}$.

The Gauss image $\nu(\Sigma)$ of a strictly convex capillary hypersurface $\Sigma$ lies in  $\mathbb{S}^{n}_{\theta}:=\{u\in\mathbb{S}^{n}|u_{n+1}\geq\cos\theta\}$, and the capillary Gauss map $\tilde{\nu}: \Sigma \to \mathcal{C}_\theta$ defined by
\begin{equation}
\tilde{\nu} := \nu + \cos\theta\, e\notag
\end{equation}
is a diffeomorphism. Its inverse $\tilde{\nu}^{-1}: \mathcal{C}_\theta \to \Sigma$ is the inverse capillary Gauss map.

Taking $X=\tilde{\nu}^{-1}$, the capillary support function of a strictly convex capillary hypersurface $\Sigma$, denoted by $h = h_{\Sigma} : \mathcal{C}_\theta \to \mathbb{R}$, is defined by
\begin{equation*}
h(\xi) := \langle X(\xi) , \nu\big(X(\xi)\big)\rangle = \langle\tilde{\nu}^{-1}(\xi)  ,\xi - \cos \theta\, e\rangle, \quad \forall \, \xi \in \mathcal{C}_\theta. 
\end{equation*}
Then $h$ satisfies the capillary boundary condition (cf. \cite[Lem. 2.4]{MWWX25})
\begin{equation}
\nabla_{\mu} h = \cot \theta\, h, \quad \text{on } \partial \mathcal{C}_\theta. \notag
\end{equation}
Denote by $\ell(\xi)$ the capillary support function of $\mathcal{C}_\theta$. When $\Sigma=\mathcal{C}_{\theta}$, we have $X(\xi)=\xi$ for all $\xi\in \mathcal{C}_\theta$; hence,
\begin{equation}
 \ell(\xi)=\sin^2\theta+\cos \theta \langle \xi, e \rangle.  \notag
\end{equation}
Moreover, $\ell$ satisfies the following relations
\begin{equation}\label{eq:fixed-cap-identities}
1-\cos\theta\leq\ell\leq\sin^2\theta,\qquad
\nabla^2\ell+\ell I=I,\qquad \ell_\mu=\cot\theta\,\ell.
\end{equation}

For our later use, set $u=h/\ell$. The Robin condition and tangential differentiation imply
\begin{equation}\label{eq:Neumann-identities}
u_\mu=0,\qquad
u_{\alpha\mu}=-\cot\theta\,u_\alpha, \qquad 1\leq\alpha\leq n-1.
\end{equation}
Consequently, at a maximum or minimum of $u$ on the closed cap, $\nabla u=0$; if the extremum lies on the boundary, the mixed Hessian entries vanish and the one-sided normal second derivative has the usual sign. Thus $\nabla^2u\leq0$ at a maximum and $\nabla^2u\geq0$ at a minimum.
Let $\hat{h}:=\hat{h}_{\widehat{\Sigma}}: \mathbb{S}^n\rightarrow \mathbb{R}$ be the (standard) support function of $\widehat{\Sigma}$. Then
\begin{equation}\label{hxi}
h(\xi)=\hat{h}(\xi-\cos \theta \,e),\quad\forall\xi\in\mathcal{C}_{\theta}.
\end{equation}

In a local orthonormal frame $\{e_{i}\}^n_{i=1}$ on $\mathcal{C}_{\theta}$  with respect to its standard induced metric, the Gauss curvature $\mathcal{K}$ of $\Sigma$ is expressed by
\begin{equation}
 \mathcal{K}\left( X(\xi)\right) = \frac{1}{\det\left( \nabla^{2} h(\xi) + h(\xi) I \right)} \notag.
\end{equation}

Let $\xi=(\xi', \xi_{n+1})\in \mathcal{C}_\theta$, where $\xi'=(\xi_1,\ldots, \xi_{n})$. A smooth function  $f: \mathcal{C}_\theta \rightarrow \mathbb{R}$ is called \emph{rotationally symmetric} if it satisfies
\begin{equation}
f(A\xi', \xi_{n+1}) = f(\xi', \xi_{n+1}), \ \forall\, \xi \in \mathcal{C}_\theta, \,  A\in O(n), \notag
\end{equation}
where $O(n)$ denotes the orthogonal group of $\mathbb R^n$. A smooth function  $f: \mathcal{C}_\theta \to \mathbb{R}$ is called \emph{even} if
\begin{equation}
f(-\xi', \xi_{n+1}) = f(\xi',\xi_{n+1}), \ \forall\, \xi \in \mathcal{C}_\theta.  \notag
\end{equation}
A strictly convex capillary hypersurface is called rotationally symmetric or even,  if its capillary support function is  rotationally symmetric or even, respectively.

\begin{lem}\label{lem:realization-strict}
Let $\theta\in(0,\frac{\pi}{2})$ and let $h$ be a positive smooth function on $\mathcal{C}_\theta$ satisfying
\[
b:=\nabla^2h+hI>0,\qquad h_\mu=\cot\theta\,h.
\]
Then $h$ is the capillary support function of a smooth strictly convex capillary convex
body $\widehat\Sigma$. If $h$ is even, then $\widehat\Sigma$ is even. Moreover,
\begin{equation}\label{eq:geometric-gradient-bound}
|\nabla h|\leq \sqrt{1+\csc^2\theta}\,\max h.
\end{equation}
\end{lem}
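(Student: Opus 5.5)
The approach is to build the hypersurface directly from $h$ by the usual "support-function-to-position-vector" formula. Then I will check the capillary boundary behaviour from the Robin condition, obtain global convexity from the convexity of the $1$-homogeneous extension of $h$, and read off the gradient bound from the fact that $\Sigma$ lies in the intersection of its supporting half-spaces.

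\emph{Step 1: the map $X$.} For $\xi\in\mathcal{C}_\theta$ set $\nu(\xi)=\xi-\cos\theta\,e\in\mathbb{S}^n_\theta$. Since $\mathcal{C}_\theta$ is a piece of the unit sphere centered at $\cos\theta\,e$, the tangent space $T_\xi\mathcal{C}_\theta$ is $\nu(\xi)^\perp$. Define
\[
X(\xi)=h(\xi)\,\nu(\xi)+\nabla h(\xi).
\]
The standard computation, as on the sphere, gives $dX=b=\nabla^2h+hI>0$ as a map $T_\xi\mathcal{C}_\theta\to T_\xi\mathcal{C}_\theta$. Hence $X$ is an immersion with unit normal $\nu$, it is locally strictly convex, $\langle X,\nu\rangle=h$, and its capillary Gauss map is the identity. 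Equivalently, $h(\xi)=H(\xi-\cos\theta\,e)$, where $H$ is the $1$-homogeneous extension of $h$ to the convex cone $\Lambda=\{y:\ y_{n+1}\ge\cos\theta\,|y|\}$. In addition, $D^2H\ge0$ on $\Lambda$ because $b>0$, so $H$ is convex on $\Lambda$ and $X=DH$.

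\emph{Step 2: boundary behaviour.} On $\partial\mathcal{C}_\theta$ we have $\xi_{n+1}=0$, so $\nu=(\sin\theta\,\bar\nu,\cos\theta)$ and the outward conormal is $\mu=(\cos\theta\,\bar\nu,-\sin\theta)$. All other tangential directions are horizontal. Using $h_\mu=\cot\theta\,h$,
\[
\langle X,E_{n+1}\rangle=h\cos\theta-\cot\theta\,h\sin\theta=0,
\]
so $X(\partial\mathcal{C}_\theta)\subset\partial\mathbb{R}^{n+1}_+$, and $\langle\nu,e\rangle=-\cos\theta=\cos(\pi-\theta)$ there.

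Next I show $X_{n+1}\ge0$. At an interior minimum of $X_{n+1}$, $E_{n+1}$ is orthogonal to the image of $dX$, which forces $\nu=E_{n+1}$, i.e.\ $\xi$ is the top point $\xi_0$. But there $X_{n+1}(\xi_0)=h(\xi_0)>0$, which exceeds the boundary value $0$. So the minimum is attained on the boundary and equals $0$.

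\emph{Step 3: embeddedness and global convexity (main obstacle).} Define
\[
K=\{x:\ x_{n+1}\ge0,\ \langle x,y\rangle\le H(y)\ \ \forall y\in\Lambda\}.
\]
By convexity of $H$, for all $y,y'\in\Lambda$,
\[
H(y)\ge H(y')+\langle DH(y'),y-y'\rangle=\langle X(y'),y\rangle,
\]
so every $X(\xi)$ lies in $K$ and supports $K$ with normal $\nu(\xi)$. Thus $\Sigma=X(\mathcal{C}_\theta)$ is contained in $\partial K$, and each point of $\Sigma$ has $\nu(\xi)$ as an outer normal of $K$. Since $\nu$ is injective and $b>0$, distinct $\xi$ give distinct points, so $X$ is an embedding. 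To conclude that $\partial K$ is exactly $\Sigma$ together with a flat piece of $\partial\mathbb{R}^{n+1}_+$, I would argue that every outer normal of $K$ at a point with $x_{n+1}>0$ lies in $\Lambda$ (otherwise $K$ would be unbounded or would miss the bottom plane). This is the delicate point, and I would handle it by a support/exposed-point argument for $K$. The result is a smooth strictly convex capillary body $\widehat\Sigma=K$ with capillary support function $h$.

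\emph{Evenness.} If $h$ is even, then $H$ is invariant under $y'\mapsto -y'$, and hence so is $K$; therefore $\widehat\Sigma$ is even.

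\emph{Step 4: the gradient bound.} Let $M=\max h$ and take $x=X(\xi)\in K$. Testing the definition of $K$ with $y=E_{n+1}$ gives $x_{n+1}\le M$. Testing with $y=(\sin\theta\,w,\cos\theta)$ for a horizontal unit vector $w$ gives $\sin\theta\,\langle x',w\rangle\le M-\cos\theta\,x_{n+1}\le M$, hence $|x'|\le M/\sin\theta$. Therefore $|X|^2\le M^2(1+\csc^2\theta)$. Since $|X|^2=h^2+|\nabla h|^2$, we get $|\nabla h|\le|X|\le\sqrt{1+\csc^2\theta}\,M$, which is \eqref{eq:geometric-gradient-bound}.
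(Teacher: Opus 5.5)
Your Steps 1, 2 and 4 are correct. Step 4 is the paper's own argument for \eqref{eq:geometric-gradient-bound}: the support inequalities in the directions $E_{n+1}$ and $(\sin\theta\,w,\cos\theta)$ give the cylinder bound, and $|X|^2=h^2+|\nabla h|^2$ finishes. It only uses $X(\mathcal{C}_\theta)\subset K$, which you do prove from the convexity of $H$ and $X_{n+1}\geq0$, so the gradient bound does not depend on the realization.

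The paper does not prove the first assertion. It quotes the correspondence \cite[Prop.~2.6]{MWWX25} and gets evenness from the uniqueness in that correspondence. Your from-scratch Step 3 has a genuine hole exactly where you flag it. You show $X(\mathcal{C}_\theta)\subset\partial K$, but the reverse inclusion $\partial K\cap\{x_{n+1}>0\}\subset X(\mathcal{C}_\theta)$ is what makes $K$ a capillary body with free boundary $\Sigma$.

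The reason you offer does not work. $K$ is bounded (by your Step 4) and meets the base (it contains $X(\partial\mathcal{C}_\theta)$) no matter what, so neither alternative yields a contradiction. Even knowing that an outer normal $\nu$ lies in $\Lambda$ is not enough: you also need the face of $K$ in direction $\nu$ to be the single point $X(\nu)$.

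Your evenness argument inherits this gap, because it needs $\widehat\Sigma=K$. The injectivity claim also needs its actual reason. If $X(\nu_1)=X(\nu_2)=x$, then $H(y)-\langle x,y\rangle\geq0$ vanishes at $\nu_1$ and $\nu_2$, hence along the segment joining them. That contradicts strict convexity of $H$ along the segment, i.e.\ $D^2H(y)[z,z]>0$ for $z\not\parallel y$.

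Both missing pieces are short.

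(a) For $x\in\partial K$ with $x_{n+1}>0$, let $A$ be the compact set of unit $y\in\Lambda$ with $\langle x,y\rangle=H(y)$. Since $y_{n+1}\geq\cos\theta>0$ on $A$, the cone spanned by $A$ is closed. If an outer normal $u$ at $x$ lay outside it, separation (perturbed by $-\epsilon E_{n+1}$) gives $v$ with $\langle v,u\rangle>0$ and $\langle v,y\rangle<0$ on $A$. Compactness of the index set and the positive slack of all other constraints, including $x_{n+1}>0$, then give $x+tv\in K$ for small $t>0$. This contradicts $\langle x,u\rangle=h_K(u)$, so every outer normal at $x$ lies in $\Lambda$.

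(b) Let $\nu\in\Lambda$ be a unit outer normal at $x$. Since $X(\nu)\in K$, we have $\langle x,\nu\rangle=h_K(\nu)=H(\nu)$, so $\phi(y)=H(y)-\langle x,y\rangle\geq0$ on $\Lambda$ is minimized at $\nu$. If $\nu\in\operatorname{int}\Lambda$, this gives $x=DH(\nu)=X(\nu)$. If $\nu=(\sin\theta\,w,\cos\theta)\in\partial\Lambda$, the first-order condition gives $x=X(\nu)+t(\cos\theta\,w,-\sin\theta)$ with $t\geq0$. Since $X(\nu)_{n+1}=0$ by Step 2, this means $x_{n+1}=-t\sin\theta\leq0$, which is impossible.

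Together with Step 2 and the interior point $\epsilon E_{n+1}$, this gives $\partial K=\Sigma\cup(K\cap\{x_{n+1}=0\})$ and completes the realization. Alternatively, cite \cite[Prop.~2.6]{MWWX25} as the paper does. Then $\widehat\Sigma=K$, since both are cut out by the same supporting half-spaces, and your symmetric definition of $K$ gives evenness as well as the paper's uniqueness argument does.
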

\begin{proof}
The first assertion is precisely the correspondence between capillary
convex functions and capillary convex bodies in
\cite[Prop. 2.6]{MWWX25}. If $Q(x',x_{n+1})=(-x',x_{n+1})$, the
capillary support function of $Q\widehat\Sigma$ is
$h(-\xi',\xi_{n+1})$. Thus, when $h$ is even, the uniqueness in that
correspondence implies $Q\widehat\Sigma=\widehat\Sigma$.

Let $M=\max h$ and let $\hat h$ be the standard support function of
$\widehat\Sigma$. For $x=(x',z)\in\widehat\Sigma$, the support inequalities in the directions $E_{n+1}$ and
$(\sin\theta\,x'/|x'|,\cos\theta)$ imply
\[
0\leq z\leq M,\qquad |x'|\leq M/\sin\theta,
\]
with the second inequality being immediate when $x'=0$. By
\cite[Lem. 2.4(1)]{MWWX25}, the inverse capillary Gauss map satisfies
$X=\nabla h+h\,(\xi-\cos\theta \,e)$, and hence
$|X|^2=h^2+|\nabla h|^2$. The preceding cylinder bound proves
\eqref{eq:geometric-gradient-bound}.
\end{proof}

For later regularity arguments, we record the following coordinate identities.
Translate the cap to
$S_\theta=\{v\in\mathbb S^n:v_{n+1}\geq\cos\theta\}$, write $h(v)$ for
$h(v+\cos\theta\, e)$, and set
\[
a(y)=(1+|y|^2)^{-1/2},\qquad v(y)=a(y)(y,1),\qquad
H(y)=h(v(y))/a(y),\qquad |y|\leq\tan\theta.
\]
Differentiation of the degree-one homogeneous extension gives
\begin{equation}\label{eq:gnomonic-Hessian}
D^2H(y)[z,z]=a(y)b_{v(y)}\bigl(P_{v(y)}(z,0),P_{v(y)}(z,0)\bigr)>0,
\end{equation}
where $P_v$ denotes orthogonal projection onto $T_v\mathbb S^n$. Since the
Gram matrix of this projection on horizontal vectors is $I-v'\otimes v'$,
whose determinant is $a^2$, we also have
\begin{equation}\label{eq:gnomonic-determinant}
\det D^2H=a^{n+2}\det b.
\end{equation}
\section{A capillary John ellipsoid theorem}
\label{sec3}
We write $x=(x',x_{n+1})\in\mathbb{R}^{n}\times\mathbb{R}$. For $a,b>0$, define the rotationally symmetric ellipsoid in $\mathbb{R}^{n+1}$ by
\begin{equation}
E=E(a,b)=\{a^2|x'|^2+b^2 x_{n+1}^2\leq  1\}
\end{equation}
and set $\eta=a/b$. We translate $E$ downwards by $-\tau^{*} E_{n+1}$ for $\tau^{*}>0$ so that the hypersurface $(\partial E-\tau^{*} E_{n+1})\cap\{x_{n+1}\ge 0\}$ meets $\{x_{n+1}=0\}$ at the constant angle $\theta\in(0,\frac{\pi}{2})$:
\begin{equation}
\begin{split}
\label{den-tau}
\tau&=\tau^{*}(a,b):=\frac{a\cos \theta}{b\sqrt{a^2\cos^2 \theta+b^2\sin^2 \theta}},\\
\lambda&=\lambda_\eta:=b\,\tau^{*}(a,b)=\frac{\eta\cos \theta}{\sqrt{\eta^2\cos^2 \theta+\sin^2 \theta}}   \notag.
\end{split}
\end{equation}
The translated $\theta$-capillary cap is denoted by
\begin{equation*}\label{Lab}
L=L(a,b):=(\partial E-\tau^{*} E_{n+1})\cap\{x_{n+1}\geq 0\}.
\end{equation*}
We write
\[
\widehat L(a,b):=(E-\tau^*E_{n+1})\cap\{x_{n+1}\geq0\}
\]
for the solid capillary ellipsoid enclosed by $L(a,b)$ and its planar base. All set inclusions below refer to this solid.
We also define $R=R(a,b):=\max_{x\in L}|x'|$ and $H=H(a,b):=\max_{x\in L}{x_{n+1}}$, i.e., $R$ is the radius of the sphere $L\cap\{x_{n+1}=0\}$ and $H$ is the height of $L$. After a direct calculation, we see that
\begin{align}
R&=R(a,b)=\frac{1}{a}\frac{\sin\theta}{\sqrt{\sin^2\theta+\eta^2\cos^2\theta}},\label{eq-R}\\
H&=H(a,b)=\frac{1}{b}\left(1-\frac{\eta\cos\theta}{\sqrt{\sin^2\theta+\eta^2\cos^2\theta}}\right)\label{eq-H}.
\end{align}
For later use we only need the even case, so the required comparison can be
proved directly at the level of convex bodies.

\begin{theo}\cite{HI26a}\label{Thm-ce}
Let $\widehat\Sigma\subset\overline{\mathbb{R}^{n+1}_{+}}$ be an even
capillary convex body whose free boundary $\Sigma$ is smooth and strictly
convex, with contact angle $\theta\in(0,\frac{\pi}{2})$. Set
\[
R^{\mathrm{out}}_{\Sigma}=\max_{x\in\Sigma}|x'|,
\qquad
R^{\mathrm{in}}_{\Sigma}=\min_{x\in\partial\Sigma}|x'|,
\qquad
H_{\Sigma}=\max_{x\in\Sigma}x_{n+1}.
\]
Then there exists $(a,b)\in(0,\infty)^2$ such that
\begin{equation*}
\widehat L(a,b)\subset\widehat{\Sigma}\subset
\left(\frac{3}{2}\frac{R^{\mathrm{out}}_{\Sigma}}
{R^{\mathrm{in}}_{\Sigma}}+3\right)\widehat L(a,b).
\end{equation*}
\end{theo}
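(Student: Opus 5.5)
The plan is to compare $\widehat\Sigma$ with two elementary rotationally symmetric solids built from $R^{\mathrm{in}}_\Sigma$, $R^{\mathrm{out}}_\Sigma$ and $H_\Sigma$: an inscribed cone and a circumscribed truncated cone. I would then fit a capillary ellipsoid cap $\widehat L(a,b)$ between them by choosing the two parameters $(a,b)$ so that its base radius $R(a,b)$ and height $H(a,b)$, given by \eqref{eq-R}--\eqref{eq-H}, are prescribed.

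\emph{Step 1: the inner cone.} Let $x\mapsto(-x',x_{n+1})$ be the reflection. Evenness makes every horizontal slice $\widehat\Sigma\cap\{x_{n+1}=s\}$ centrally symmetric and convex.
\begin{itemize}
\item The base slice is bounded by $\partial\Sigma$, so it contains the disk $B'(R^{\mathrm{in}}_\Sigma)$.
\item If $(p',H_\Sigma)\in\Sigma$ is a highest point, then $(-p',H_\Sigma)\in\widehat\Sigma$ as well. Hence the axis point $(0,H_\Sigma)\in\widehat\Sigma$.
\end{itemize}
By convexity, the cone $K$ with base $B'(R^{\mathrm{in}}_\Sigma)$ and apex $(0,H_\Sigma)$ lies in $\widehat\Sigma$.

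\emph{Step 2: the outer solid.} Clearly $\widehat\Sigma\subset B'(R^{\mathrm{out}}_\Sigma)\times[0,H_\Sigma]$. I would sharpen this with the contact angle. At each point of $\partial\Sigma$ the supporting half-space with normal $\nu=-\cos\theta\,e+\sin\theta\,\bar\nu$ contains $\widehat\Sigma$. This gives $\widehat\Sigma\subset\{x_{n+1}\le \tan\theta\,(R^{\mathrm{out}}_\Sigma-|x'|)\}$ up to the in/out discrepancy, and in particular $H_\Sigma\le \tan\theta\, R^{\mathrm{out}}_\Sigma$. This matters because the aspect ratio of the caps $L(a,b)$ is limited. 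By \eqref{eq-R}--\eqref{eq-H}, $H(a,b)/R(a,b)=\eta\bigl(\sqrt{\sin^2\theta+\eta^2\cos^2\theta}-\eta\cos\theta\bigr)/\sin\theta$. This expression is continuous in $\eta$, tends to $0$ as $\eta\to0$, and tends to $\tfrac12\tan\theta$ as $\eta\to\infty$. Since $L(a,b)$ is widest at its base (the center of $E$ lies below the plane), we also have $\widehat L(a,b)\subset B'(R(a,b))\times[0,H(a,b)]$.

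\emph{Step 3: choosing $(a,b)$.} Set $r=R^{\mathrm{in}}_\Sigma/2$ and $t=\min\{H_\Sigma/2,\ c_\theta r\}$, where $c_\theta<\tfrac12\tan\theta$ is a fixed value in the range of the ratio $H/R$. By the intermediate value theorem pick $\eta$ with $H/R=t/r$, then fix the scale $a$ so that $R(a,b)=r$. Because $r/R^{\mathrm{in}}_\Sigma+t/H_\Sigma\le1$, the cylinder $B'(r)\times[0,t]$ lies in $K$, and so $\widehat L(a,b)\subset K\subset\widehat\Sigma$.

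\emph{Step 4: the outer inclusion (main obstacle).} I must show that $\lambda\widehat L(a,b)\supset\widehat\Sigma$ for $\lambda=\tfrac32 R^{\mathrm{out}}_\Sigma/R^{\mathrm{in}}_\Sigma+3$. Dilation keeps the contact angle, so $\lambda\widehat L$ has base radius $\lambda r\ge \tfrac34 R^{\mathrm{out}}_\Sigma+\tfrac32R^{\mathrm{in}}_\Sigma$ and height $\lambda t$. It is not a cylinder, so I cannot simply compare with $B'(R^{\mathrm{out}}_\Sigma)\times[0,H_\Sigma]$. Instead I would use the lower-bound shape of an ellipsoid cap: $\lambda\widehat L$ contains the cone over its base with apex at its top, and near the base its boundary has slope $\tan\theta$. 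I would then compare this with the wedge bound from Step 2. Two cases are needed.
\begin{itemize}
\item If $t=H_\Sigma/2$, the height $\lambda t\ge \tfrac32H_\Sigma$ absorbs the top.
\item If $t=c_\theta r$, I would use $H_\Sigma\le\tan\theta\,R^{\mathrm{out}}_\Sigma$ together with the angle comparison at the base.
\end{itemize}
Making the constants come out exactly as $\tfrac32\frac{R^{\mathrm{out}}}{R^{\mathrm{in}}}+3$ may require tuning $r$, $t$ and $c_\theta$. I expect this bookkeeping, specifically controlling how $\lambda\widehat L$ sits relative to the wedge near its rim, to be the hardest step.
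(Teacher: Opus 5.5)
Your Steps 1--3 are correct. They give the inner inclusion as the paper does: the cone over $B'(R^{\mathrm{in}}_\Sigma)$ with apex $(0,H_\Sigma)$ lies in $\widehat\Sigma$, the cylinder $B'(r)\times[0,t]$ lies in that cone when $r/R^{\mathrm{in}}_\Sigma+t/H_\Sigma\le1$, and $\widehat L(a,b)$ lies in the cylinder over its base.

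The gap is Step 4, and it is not bookkeeping. With your choice $r=R^{\mathrm{in}}_\Sigma/2$ the outer inclusion is false in general. Since $\widehat L(a,b)\subset B'(r)\times[0,t]$, your dilate satisfies $\lambda\widehat L(a,b)\subset\{|x'|\le\lambda r\}$. Here $\lambda r=\frac34R^{\mathrm{out}}_\Sigma+\frac32R^{\mathrm{in}}_\Sigma$, which is $<R^{\mathrm{out}}_\Sigma$ whenever $R^{\mathrm{out}}_\Sigma>6R^{\mathrm{in}}_\Sigma$. Yet $\widehat\Sigma$ contains a point with $|x'|=R^{\mathrm{out}}_\Sigma$. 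For $n\ge2$ nothing in the hypotheses excludes this, since the base can be an elongated symmetric domain. This failure occurs in both of your cases, because $t$ does not affect the base radius. The stated constant forces $r\ge 2R^{\mathrm{out}}_\Sigma R^{\mathrm{in}}_\Sigma/(3R^{\mathrm{out}}_\Sigma+6R^{\mathrm{in}}_\Sigma)$, which tends to $\frac23R^{\mathrm{in}}_\Sigma$. The natural choice is then $r=\frac23R^{\mathrm{in}}_\Sigma$, $t=\frac13H_\Sigma$. Its aspect ratio $H_\Sigma/(2R^{\mathrm{in}}_\Sigma)$ lies in the admissible range $(0,\frac12\tan\theta)$ of $H(a,b)/R(a,b)$ only if $H_\Sigma<R^{\mathrm{in}}_\Sigma\tan\theta$. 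Your bound $H_\Sigma\le R^{\mathrm{out}}_\Sigma\tan\theta$ is too weak for this, and the auxiliary $c_\theta$-case is a symptom of that.

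The missing ingredient is the sharper height bound $H_\Sigma<R^{\mathrm{in}}_\Sigma\tan\theta$.
\begin{itemize}
\item Let $x_0=(R^{\mathrm{in}}_\Sigma\omega,0)\in\partial\Sigma$ be a point closest to the axis. The disk $B'(R^{\mathrm{in}}_\Sigma)$ lies in the base and touches its boundary at $x_0$, so the base has outward normal $\omega$ there.
\item Hence the capillary supporting plane at $x_0$ has normal $\sin\theta\,\omega+\cos\theta\,E_{n+1}$ and meets the axis at height $R^{\mathrm{in}}_\Sigma\tan\theta$.
\item The axis point $(0,H_\Sigma)\in\widehat\Sigma$ from your Step 1 lies strictly below this plane by strict convexity.
\end{itemize}
With this bound, take $r=\frac23R^{\mathrm{in}}_\Sigma$ and $t=\frac13H_\Sigma$.
\begin{itemize}
\item Then $t/r\in(0,\frac12\tan\theta)$, so the cap exists with no case split.
\item Since $r/R^{\mathrm{in}}_\Sigma+t/H_\Sigma=1$, your Step 3 gives the inner inclusion.
\item The outer inclusion needs no wedge or rim analysis. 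Let $\mathcal C(R,H)$ denote the cone with base $B'(R)$ and apex $(0,H)$. Then $\mathcal C(r,t)\subset\widehat L(a,b)$ and $\widehat\Sigma\subset B'(R^{\mathrm{out}}_\Sigma)\times[0,H_\Sigma]$. The elementary cross-section comparison $B'(R_2)\times[0,H_2]\subset\bigl(\frac{R_2}{R_1}+\frac{H_2}{H_1}\bigr)\mathcal C(R_1,H_1)$ gives exactly the factor $\frac{R^{\mathrm{out}}_\Sigma}{r}+\frac{H_\Sigma}{t}=\frac32\frac{R^{\mathrm{out}}_\Sigma}{R^{\mathrm{in}}_\Sigma}+3$.
\end{itemize}
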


\begin{proof}
Let $D=\widehat\Sigma\cap\{x_{n+1}=0\}$. Evenness and strict convexity imply
\[
B_{R^{\mathrm{in}}_{\Sigma}}^n(0)\subset D
\subset B_{R^{\mathrm{out}}_{\Sigma}}^n(0),
\]
and the unique highest point of $\Sigma$ is $(0,H_{\Sigma})$. At a point
$x_0=(R^{\mathrm{in}}_{\Sigma}\omega,0)\in\partial\Sigma$ realizing the
inner radius, the horizontal outward normal of $D$ is $\omega$. Hence the
capillary tangent plane at $x_0$ has normal
$\sin\theta\,\omega+\cos\theta\,E_{n+1}$ and intersects the vertical axis
at height $R^{\mathrm{in}}_{\Sigma}\tan\theta$. Since this plane supports
the convex body, strict convexity yields
\begin{equation}\label{eq:height-inner-radius}
H_{\Sigma}<R^{\mathrm{in}}_{\Sigma}\tan\theta.
\end{equation}

Put $R_0=\frac23R^{\mathrm{in}}_{\Sigma}$ and
$H_0=\frac13H_{\Sigma}$. By \eqref{eq:height-inner-radius},
$0<H_0/R_0<\frac12\tan\theta$. Define
\[
\lambda=\frac{H_0/R_0}{\tan\theta-H_0/R_0},\qquad
a=\frac{\sqrt{1-\lambda^2}}{R_0},\qquad
b=\frac{1-\lambda}{H_0}.
\]
Then $0<\lambda<1$, and substitution in \eqref{den-tau}--\eqref{eq-H}
yields $R(a,b)=R_0$ and $H(a,b)=H_0$.

Let $\mathcal C(R,H)$ denote the cone with base $B_R^n(0)$ and vertex $(0,H)$, and let $\mathcal Z(R,H)=B_R^n(0)\times[0,H]$. Convexity implies
\[
\mathcal C(R_0,H_0)\subset\widehat L(a,b)
\subset\mathcal Z(R_0,H_0),
\qquad
\mathcal C(R^{\mathrm{in}}_{\Sigma},H_{\Sigma})
\subset\widehat\Sigma
\subset\mathcal Z(R^{\mathrm{out}}_{\Sigma},H_{\Sigma}).
\]
The elementary cross-section comparison
\[
\mathcal Z(R_2,H_2)\subset
\left(\frac{R_2}{R_1}+\frac{H_2}{H_1}\right)\mathcal C(R_1,H_1)
\]
now gives
\[
\widehat L(a,b)\subset\mathcal Z(R_0,H_0)
\subset\mathcal C(R^{\mathrm{in}}_{\Sigma},H_{\Sigma})
\subset\widehat\Sigma
\]
and
\[
\widehat\Sigma\subset\mathcal Z(R^{\mathrm{out}}_{\Sigma},H_{\Sigma})
\subset\left(\frac{3}{2}\frac{R^{\mathrm{out}}_{\Sigma}}
{R^{\mathrm{in}}_{\Sigma}}+3\right)\mathcal C(R_0,H_0)
\subset\left(\frac{3}{2}\frac{R^{\mathrm{out}}_{\Sigma}}
{R^{\mathrm{in}}_{\Sigma}}+3\right)\widehat L(a,b).
\]
\end{proof}

\section{A non-collapsing estimate for capillary hypersurfaces}
\label{sec4}
In this section, we first apply Theorem \ref{Thm-ce} to prove Lemma \ref{TREQ}.

\begin{proof}[Proof of Lemma \ref{TREQ}.]  
We first consider the case $\gamma\geq2$. Writing $M_h=\max h$,
the hypothesis \eqref{GY} gives
\[
|\nabla\log h|^2
\leq N\left(\frac{h}{M_h}\right)^{\gamma-2}\leq N.
\]
Any two points of the closed cap can be joined through its pole by a
piecewise geodesic of length at most $2\theta$. Integration therefore gives
\[
\log\frac{\max h}{\min h}\leq2\theta\sqrt N,
\qquad \frac{\max h}{\min h}\leq e^{2\theta\sqrt N}.
\]
It remains to prove the case $0<\gamma<2$.

Suppose \eqref{eq-ub1} fails for fixed $n,\theta,\gamma,N$.
By dilation, choose $h_j$ with $\max h_j=1$ and
$R_j^{\rm out}/R_j^{\rm in}\to\infty$. Lemma \ref{lem:realization-strict}
places their bodies $K_j$ in the fixed cylinder
$\{|x'|\leq\csc\theta,\ 0\leq x_{n+1}\leq1\}$, so $R_j^{\rm in}\to0$.
By compactness \cite{S14}, a subsequence converges in Hausdorff distance
to $K$. Its support function $h_K$ is the uniform limit of $h_j$ on
$S_\theta$, with $\max h_K=1$.

At a closest base point $R_j^{\rm in}\omega_j$, the base normal is
$\omega_j$. The capillary angle gives normal
$v_j=(\sin\theta\,\omega_j,\cos\theta)$ and support value
$h_j(v_j)=\sin\theta\,R_j^{\rm in}\to0$.
After a further subsequence, $\omega_j\to\omega$ and
$h_K(v_0)=0$, where $v_0=(\sin\theta\,\omega,\cos\theta)$.
Set $a_\gamma=1-\gamma/2\in(0,1)$. The gradient hypothesis yields
$|\nabla(h_j^{a_\gamma})|\leq a_\gamma\sqrt N$.
Integration on the closed cap and uniform convergence give
\begin{equation}\label{eq:boundary-zero-growth}
h_K(v)^{a_\gamma}\leq a_\gamma\sqrt N\,d_{S_\theta}(v,v_0).
\end{equation}

For every $(x',z)\in K$, evenness and $K\subset\{z\geq0\}$ imply
\[
\sin\theta\,\omega\cdot x'+\cos\theta\,z\leq0,\qquad
-\sin\theta\,\omega\cdot x'+\cos\theta\,z\leq0.
\]
Thus $z=0$ and $\omega\cdot x'=0$. If $n=1$, these identities already
give $K=\{0\}$, contradicting $\max h_K=1$. For $n\geq2$, choose
$x=(x',0)\in K$ with $x'\neq0$, and put $e'=x'/|x'|$.
The unit-speed boundary curve
\[
v(t)=\bigl(\sin\theta[\cos(t/\sin\theta)\omega
                    +\sin(t/\sin\theta)e'],\cos\theta\bigr)
\]
satisfies $d_{S_\theta}(v(t),v_0)\leq t$ and
$h_K(v(t))\geq x\cdot v(t)=|x'|t+O(t^3)$.
This contradicts \eqref{eq:boundary-zero-growth}, whose upper bound is
$O(t^{1/a_\gamma})=o(t)$. Hence
\begin{equation}\label{eq-ub1}
R_\Sigma^{\rm out}/R_\Sigma^{\rm in}\leq C'(n,\theta,\gamma,N).
\end{equation}
Using Theorem \ref{Thm-ce} together with \eqref{eq-ub1}, we obtain a translated $\theta$-capillary cap $L(a,b)$ such that
\begin{equation}\label{inclusion}
\widehat L(a,b)\subset\widehat{\Sigma}\subset C''\widehat L(a,b),
\end{equation}
where $C''=\frac{3}{2}C'+3$.
 Denote by $\varpi$ the capillary support function of $L(a,b)$. By \eqref{inclusion} and \eqref{hxi}, we have
\begin{equation}\label{hui}
   \varpi(\xi)\leq h(\xi)\leq C'' \varpi(\xi), \quad \forall \xi\in   \mathcal{C}_{\theta}.
\end{equation}
Using \eqref{hxi} again, we get
\begin{equation}
     \varpi(\xi)=\sqrt{\frac{|\xi'|^2}{a^2}+\frac{(\xi_{n+1}+\cos \theta\, )^2}{b^2}} -\tau^{*} (\xi_{n+1}+\cos \theta),   \quad \xi\in \mathcal{C}_{\theta}.     \notag
\end{equation}

Next we consider the following two cases.

{\em Case 1:} ($a>b$). In this case, $\eta>1$ and by a direct calculation, using \eqref{eq-R} and \eqref{eq-H}, we have
\begin{equation}\label{Eq-RH}
\frac{R}{H}=\frac{\sqrt{\sin^2\theta+\eta^2\cos^2\theta}+\eta\cos\theta}{\eta\sin\theta}\leq 2\cot\theta+\frac{1}{\eta}\leq 2\cot\theta+1.
\end{equation}
In addition,
\begin{equation*}\label{eq-inclu}
   \mathcal{C}(R,H)\subset \widehat L(a,b)\subset\widehat{\Sigma}\subset C''\widehat L(a,b)\subset\mathcal{C}(C''R,C''R\tan\theta),
\end{equation*}
Indeed, the supporting hyperplanes of $\widehat L(a,b)$ along its spherical base imply $|x'|\leq R-x_{n+1}\cot\theta$, which proves the last inclusion. The lower bound below is the minimum of the support function of the inner cone, namely its distance from the origin to a supporting hyperplane. Therefore, we have
\begin{equation}\label{eq-uplb}
\max_{\mathcal{C}_{\theta}} h\leq C''R\max\{\tan\theta,1\},\quad \min_{\mathcal{C}_{\theta}} h\geq\frac{RH}{\sqrt{R^2+H^2}},
\end{equation}
where $\frac{RH}{\sqrt{R^2+H^2}}$ is the distance from the origin  to the lateral surface of the cone $\mathcal{C}(R,H)$. Thus, by \eqref{Eq-RH} and \eqref{eq-uplb} we obtain
\begin{equation*}\label{pin-2}
    \frac{\max_{\mathcal{C}_{\theta}}h}{\min_{\mathcal{C}_{\theta}}h}\leq C''\max\{\tan\theta,1\}\sqrt{\left(\frac{R}{H}\right)^2+1}\leq C''\max\{\tan\theta,1\}  \sqrt{(2\cot\theta+1)^2+1}.
\end{equation*}

{\em Case 2:} ($a\leq b$). Set $|\xi'|=t$ with $0\leq t \leq \sin \theta$ and write $\varpi(\xi):=q(t)$, where
\begin{equation*}
\begin{split}
\label{ARE}
   q(t):&=\sqrt{\frac{t^2}{a^2}+\frac{1-t^2}{b^2}}-\tau^{*}\sqrt{1-t^2}\geq\frac{t}{a}-\tau^{*}\sqrt{1-t^2}.
   \end{split}
\end{equation*}
 Then for any $\tilde{t}\in \big[0,\left( \frac{b}{a}\right)^{\frac{2-\gamma}{2}}\sin\theta \big]$, we have
\begin{equation}\label{KI}
    \tilde{ t}\left(\frac{1}{a}\right)^{\frac{\gamma}{2}}  \left(\frac{1}{b} \right)^{\frac{2-\gamma}{2}} \leq q\left(\tilde{t}\left(\frac{a}{b}\right)^{\frac{2-\gamma}{2}} \right) +\frac{1}{b},
\end{equation}
 where we have used $\tau^{*}\leq \frac{ 1}{b}$. Fix $\omega\in\mathbb S^{n-1}$ and set
$\xi(t)=(t\omega,\sqrt{1-t^2}-\cos\theta)\in\mathcal{C}_\theta$. Denote $p(t):=h(\xi(t))^{\frac{2-\gamma}{2}}$. Using \eqref{GY}, we get
\begin{equation}
\begin{split}
   \Big|\frac{d}{dt}p(t)\Big|&\leq \frac{2-\gamma}{2}\frac{1}{\sqrt{1-t^2}}N^{\frac{1}{2}} (\max_{\mathcal{C}_{\theta}} h)^{1-\frac{\gamma}{2}} \notag\\
   &\leq \frac{2-\gamma}{2}\frac{1}{\cos \theta}N^{\frac{1}{2}} (\max_{\mathcal{C}_{\theta}} h)^{1-\frac{\gamma}{2}}\leq C_{\theta,\gamma}N^{\frac{1}{2}}\left(\frac{C''}{a}\right)^{\frac{2-\gamma}{2}},\notag
   \end{split}
\end{equation}
where $C_{\theta,\gamma}$ is a positive constant that may change from line to line, and depends only on $\theta,\gamma$. Thus
\begin{equation}
\begin{split}
p\left(\tilde{t}\left(\frac{a}{b}\right)^{\frac{2-\gamma}{2}} \right)&\leq p(0)+C_{\theta,\gamma}N^{\frac{1}{2}}\left(\frac{C''}{a}\right)^{\frac{2-\gamma}{2}}\tilde{t}\left(\frac{a}{b}\right)^{\frac{2-\gamma}{2}}\\
&\leq \left(1+\tilde{t}C_{\theta,\gamma}N^{\frac{1}{2}}\right)\left(\frac{C''}{b} \right)^{\frac{2-\gamma}{2}}. \notag
\end{split}
\end{equation}
It follows that
\begin{equation}\label{UY}
    h(\xi)\leq \left(1+\tilde{t}C_{\theta,\gamma}N^{\frac{1}{2}}\right)^{\frac{2}{2-\gamma}}\frac{C''}{b}, \quad {\rm with} \ |\xi'|=\tilde{t}\left(\frac{a}{b}\right)^{\frac{2-\gamma}{2}}.
\end{equation}
Since $h(\xi)\geq \varpi (\xi)$, combining \eqref{KI} with \eqref{UY} yields
\begin{equation}
     \tilde{t}\left(\frac{1}{a}\right)^{\frac{\gamma}{2}}  \left(\frac{1}{b} \right)^{\frac{2-\gamma}{2}}\leq \left[1+ C''(1+\tilde{t}C_{\theta,\gamma}N^{\frac{1}{2}})^{\frac{2}{2-\gamma}}\right]\frac{1}{b}. \notag
\end{equation}
Choose
\[
t_0:=\min\left\{\frac12\sin\theta,\,C^{-1}_{\theta,\gamma}N^{-1/2}\right\}.
\]
Since $b/a\geq1$, this choice lies in the interval allowed above and satisfies $t_0C_{\theta,\gamma}N^{1/2}\leq1$. Hence
\begin{equation}\label{TRE}
\frac{b}{a}\leq
\left[\frac{1+C''2^{\frac{2}{2-\gamma}}}{t_0}\right]^{\frac{2}{\gamma}}.
\end{equation}
On the other hand, 
\begin{equation}
\varpi(\xi)\geq (\xi_{n+1}+\cos \theta)\left(\frac{1}{b}-\tau^{*}\right)\geq \cos \theta \left(\frac{1}{b}-\tau^{*}\right)=\cos \theta\frac{1-\lambda}{b} \notag.
\end{equation}
By a direct computation,
\[
\lambda':=\lambda'_{\eta}=\frac{\cos \theta \sin^2 \theta}{(\eta^2\cos^2 \theta+\sin^2 \theta)^{3/2}}>0.
\]
Since $\eta \leq1$, we have $\lambda\leq  \cos \theta$. It follows that
\begin{equation}\label{TR}
     \varpi(\xi)\geq  \frac{\cos\theta(1-\cos \theta)}{b}.
\end{equation}
Using \eqref{hui} and \eqref{TR}, we get
\begin{equation*}\label{pin-1}
    \frac{\max_{\mathcal{C}_{\theta}}h}{\min_{\mathcal{C}_{\theta}}h}\leq 
    \frac{bC''}{a \cos \theta(1-\cos \theta)}.
\end{equation*}
Combining this with \eqref{TRE} yields \eqref{maxmin}. This completes the proof.
\end{proof}

With the aid of Lemma \ref{TREQ}, we present a new approach to derive $C^0$ estimates for certain capillary curvature problems, relying on the corresponding gradient estimates.

We take the capillary $L_p$ curvature problem as an example. This problem seeks a capillary convex hypersurface whose capillary support function $h:\mathcal{C}_\theta\rightarrow \mathbb R$ satisfies the following fully nonlinear partial differential equation with a Robin boundary condition:
\begin{equation}\label{Mong-Eq25}
\left\{
\begin{array}{l@{\ }l}
\frac{\sigma_{n}}{\sigma_{n-k}}(\nabla^{2}h + hI) = f h^{p-1}, & \quad  {\rm in} \ \mathcal{C}_{\theta}, \\
\nabla_{\mu}h=\cot \theta \,h, & \quad  {\rm on} \ \partial\mathcal{C}_{\theta},
\end{array}\right.
\end{equation}
where $1 \leq k <n$ is an integer, and $\sigma_k(\nabla^{2}h + hI)$ is the $k$-th elementary symmetric function of the principal radii of curvature.

 Recently, Y. Hu and Ivaki \cite{HI26} established the solvability of \eqref{Mong-Eq25} in the even case when $\theta\in (0,\frac{\pi}{2})$ and $1<p<k+1$, eliminating an extra contact-angle restriction previously required in \cite{MWW25}.
The key to their proof lies in obtaining a gradient estimate independent of $\theta$ (as illustrated below), and then combining it with the equation to derive the uniform $C^0$ and $C^2$ estimates simultaneously. This idea can be viewed as a capillary extension of the $L_p$ curvature problem in $\mathbb{R}^{n+1}$ with $1<p<k+1$ (see \cite{HI24}).

\begin{lem}\cite[Lem. 3.1]{HI26}\label{IUYQ}
Let $1<p<k+1$, $\theta\in (0, \frac{\pi}{2})$ and $0<\gamma< \frac{2(p-1)}{k}$. Suppose $h$ is an even, smooth, strictly convex solution of
\begin{equation*}
\left\{
\begin{array}{l@{\ }l}
\widetilde{F}(\nabla^{2}h + hI) = f h^{p-1}, & \quad  {\rm in} \ \mathcal{C}_{\theta}, \\
\nabla_{\mu}h=\cot \theta \,h, & \quad  {\rm on} \ \partial\mathcal{C}_{\theta},
\end{array}\right.
\end{equation*}
where $\widetilde{F}$ is a symmetric, $k$-homogeneous, smooth curvature function. Then there exists $C_0=C_0(\theta,k,p,\gamma,f)$ such that
\[
\frac{|\nabla h|^2}{h^{\gamma}}\leq C_{0} \left( \max_{\mathcal{C}_{\theta}}h\right)^{2-\gamma}.
\]
\end{lem}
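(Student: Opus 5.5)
The plan is a maximum-principle argument for the auxiliary function
\[
\varphi=\frac{|\nabla h|^2}{h^{\gamma}}\quad\text{on }\mathcal C_\theta ,
\]
which is well defined because $h>0$. Put $M=\max_{\mathcal C_\theta}h$. The goal is to show that at a maximum point $\xi_0$ of $\varphi$ either $\varphi(\xi_0)\le C_0M^{2-\gamma}$ directly, or the maximum cannot occur there. By the homogeneity of the equation, replacing $h$ by $h/M$ changes $f$ only by the factor $M^{p-1-k}$. So I would first check that every constant below is scale-invariant and depends on $f$ only through $\min f$ and $\|f\|_{C^1}$.

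\emph{Interior maximum.} Choose an orthonormal frame at $\xi_0$ with $\nabla h=h_1e_1$, and write $b=\nabla^2h+hI$.

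1. The critical point condition $\nabla\varphi=0$ gives $h_1h_{1i}=\frac{\gamma}{2}\frac{h_1^2}{h}h_i$. Hence $b_{1i}=0$ for $i\neq1$ and
\[
b_{11}=h+\frac{\gamma}{2}\frac{h_1^2}{h}.
\]
After rotating $e_2,\dots,e_n$ we may assume $b$ is diagonal.

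2. Apply the linearized operator $F^{ij}=\partial\widetilde F/\partial b_{ij}$ to $\log\varphi$ and use $F^{ij}(\log\varphi)_{ij}\le0$. Convert $h_{kij}$ into $b_{ijk}$ with the commutation formula on the sphere. Use the differentiated equation $F^{ij}b_{ij1}=(fh^{p-1})_1=f_1h^{p-1}+(p-1)fh^{p-2}h_1$ and Euler's identity $F^{ij}b_{ij}=k\widetilde F=kfh^{p-1}$.

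3. The gain comes from the term $(p-1)fh^{p-2}h_1^2/|\nabla h|^2$. It competes against $-\frac{\gamma}{2}F^{ij}b_{ij}/h$, which produces $-\frac{k\gamma}{2}fh^{p-2}$. The remaining pieces are $F^{11}$-terms controlled via the formula for $b_{11}$ and the nonnegative term $F^{ii}h_{ii}^2$-type contributions. The net coefficient is
\[
(p-1)-\frac{k\gamma}{2}>0,
\]
which is exactly where the hypothesis $\gamma<2(p-1)/k$ enters. What is left is an inequality of the form $c\,fh^{p-2}\le C|\nabla f|h^{p-1}/|h_1|$, plus lower-order terms. It yields $|h_1|\le Ch$, and hence $\varphi\le Ch^{2-\gamma}\le CM^{2-\gamma}$.

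\emph{Boundary maximum.} On $\partial\mathcal C_\theta$ I would compute $\nabla_\mu\varphi$, aiming to show $\nabla_\mu\varphi\le0$ with strict inequality unless $\nabla h$ is controlled, which contradicts the Hopf-type requirement $\nabla_\mu\varphi\ge0$ at a boundary maximum.

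1. Use the Robin condition $h_\mu=\cot\theta\,h$ and its tangential derivatives.
2. Use that $\partial\mathcal C_\theta$ is umbilic in $\mathcal C_\theta$ with second fundamental form a multiple of $\cot\theta$.
3. Use the relations \eqref{eq:Neumann-identities} for $u=h/\ell$. These suggest it may be cleaner to run the whole argument with $u$ in place of $h$, i.e. with $|\nabla u|^2/u^\gamma$ or with $\ell$-weights. The two functions are comparable because $1-\cos\theta\le\ell\le\sin^2\theta$, so the final estimate transfers back to $h$ at the cost of $\theta$-dependent constants.

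\emph{Evenness.} The hypothesis that $h$ is even is used to guarantee positivity of $h$ and the origin-centred geometry, through Lemma \ref{lem:realization-strict}. It also gives the global bound $|\nabla h|\le\sqrt{1+\csc^2\theta}\,M$, which handles any residual region where $h$ is comparable to $M$.

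I expect the boundary analysis to be the main obstacle. The sign of $\nabla_\mu\varphi$ involves $\nabla^2h(\mu,\mu)$, which is not controlled by the Robin condition alone. If the direct computation fails, I would first modify the test function to $\varphi e^{\beta\ell}$ or to $|\nabla u|^2/u^\gamma$ and choose $\beta$ so that the boundary terms acquire a favorable sign. The price is extra interior terms of order $\beta F^{ij}\delta_{ij}$, which must then be absorbed using $\gamma<2(p-1)/k$ with a little room to spare.
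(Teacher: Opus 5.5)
Your interior computation for $\varphi=|\nabla h|^2/h^\gamma$ is sound. It is the closed-case argument: the $F^{11}$-terms carry the coefficient $\gamma(1-\frac{\gamma}{2})h_1^2/h^2-(2-\gamma)$, which is nonnegative once $h_1^2\ge 2h^2/\gamma$. What remains is $(2(p-1)-k\gamma)fh^{p-2}\le 2h^{p-1}|\nabla f|/|\nabla h|$.

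The gap is the boundary case, which you leave open. It is exactly what separates the capillary lemma from its closed counterpart. Since $h_{\alpha\mu}=0$ on $\partial\mathcal C_\theta$, one gets $\nabla_\mu\log\varphi=\cot\theta\,(2hh_{\mu\mu}/|\nabla h|^2-\gamma)$, and nothing controls $h_{\mu\mu}$ before a $C^2$ estimate.

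Your first repair cannot work: multiplying by $e^{\beta\ell}$ only adds the bounded quantity $\beta\cot\theta\,\ell$ to this derivative.

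Your second repair, $|\nabla u|^2/u^\gamma$, does make the normal derivative $-2\cot\theta\,|\nabla u|^2/u^\gamma<0$. But then the interior argument no longer closes. After commuting derivatives, the third-order terms contribute $-\frac{2(1-\ell)}{\ell}\sum_iF^{ii}$, and $-\gamma F^{ij}u_{ij}/u$ contributes $+\frac{\gamma}{\ell}\sum_iF^{ii}$. So the net coefficient of $\sum_iF^{ii}$ is $\frac{\gamma-2(1-\ell)}{\ell}+O(u/|\nabla u|)$. This is negative whenever $\gamma<2\cos^2\theta$, since $1-\ell\ge\cos^2\theta$.

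Contrary to your last sentence, such terms cannot be absorbed by the margin $2(p-1)-k\gamma$. That margin multiplies $fh^{p-2}$, while $\sum_iF^{ii}$ is unbounded relative to it, because an eigenvalue of $b$ may degenerate. Terms of order $\sum_iF^{ii}$ can only be absorbed by the positive $\gamma\sum_iF^{ii}$.

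The missing idea is the precise weight $\ell^{2-\gamma}$, i.e.\ $\Phi=\ell^{2-\gamma}|\nabla u|^2/u^\gamma=|\ell\nabla u|^2/h^\gamma=|\nabla h-u\nabla\ell|^2/h^\gamma$. This is the test function the paper uses in the proof of Lemma~\ref{gra-esi}. With $q=n+1$, that lemma covers this statement for $\widetilde F=\det$; the statement itself is only quoted from \cite{HI26}.

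By \eqref{eq:Neumann-identities}, $(\log\Phi)_\mu=-2\cot\theta+(2-\gamma)\cot\theta=-\gamma\cot\theta<0$. This excludes boundary maxima without any Hessian information. In the interior, the weight restores the $\sum_iF^{ii}$-coefficient to $\gamma+O(u/|\nabla u|)$. It also makes the quadratic terms combine into the perfect square $\frac{\gamma(2-\gamma)}{2}F^{ij}(\frac{u_i}{u}+\frac{\ell_i}{\ell})(\frac{u_j}{u}+\frac{\ell_j}{\ell})$. All remaining terms are $O(u/|\nabla u|)$. They are absorbed at a maximum where $|\nabla u|^2\ge Mu^2$ with $M$ large, leaving $(2(p-1)-k\gamma-CM^{-1/2})fh^{p-2}\le0$, a contradiction.

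Evenness plays no role in this estimate beyond positivity of $h$, and the bound $|\nabla h|\le\sqrt{1+\csc^2\theta}\max h$ does not need it.
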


Combining Lemma \ref{TREQ} with Lemma \ref{IUYQ}, we present a new argument to obtain the following $C^0$ estimate for Eq. \eqref{Mong-Eq25} .
\begin{lem}\label{NEWC0}
Let $1<p<k+1$ and $\theta\in (0,\frac{\pi}{2})$. Suppose $h$ is an even, positive, smooth and strictly convex solution to Eq. \eqref{Mong-Eq25}. Then there exists a positive constant $C$ depending on $\theta,k,p,\gamma, f$ such that
\begin{equation}\label{NEWH}
\frac{1}{C}\leq h \leq C.
\end{equation}
\end{lem}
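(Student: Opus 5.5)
The plan is to combine the gradient estimate of Lemma \ref{IUYQ} with the non-collapsing estimate of Lemma \ref{TREQ}, and then use the equation at extremal points to turn the ratio bound into absolute bounds. Fix $\gamma\in(0,\frac{2(p-1)}{k})$; this interval is nonempty since $p>1$. The function $\widetilde F=\sigma_n/\sigma_{n-k}$ is symmetric, $k$-homogeneous and smooth on the positive cone. Hence Lemma \ref{IUYQ} applies to the even solution $h$ and gives
\[
\frac{|\nabla h|^2}{h^{\gamma}}\leq C_0(\max_{\mathcal C_\theta}h)^{2-\gamma}\quad\text{in }\mathcal C_\theta .
\]
This is exactly hypothesis \eqref{GY} with $N=C_0$. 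By Lemma \ref{lem:realization-strict}, $h$ is the capillary support function of an even, smooth, strictly convex capillary body. So Lemma \ref{TREQ} yields $\max h\leq C_1\min h$, with $C_1=C_1(n,\theta,\gamma,C_0)$.

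Next I would obtain the absolute bounds from the equation at the extremal points of $u=h/\ell$, rather than of $h$, because \eqref{eq:Neumann-identities} makes boundary extrema of $u$ behave like interior ones. At a maximum point $\xi_0$ of $u$ we have $\nabla u=0$ and $\nabla^2u\leq0$. Write $h=u\ell$. Then $\nabla h=u\nabla\ell$ and $\nabla^2h\leq u\nabla^2\ell$ at $\xi_0$. Using $\nabla^2\ell+\ell I=I$, this gives
\[
b=\nabla^2h+hI\leq u(\nabla^2\ell+\ell I)=u\,I .
\]
The operator $\sigma_n/\sigma_{n-k}$ is monotone on the positive cone and $k$-homogeneous, with value $\binom{n}{k}^{-1}$ at $I$. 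Therefore
\[
f h^{p-1}\leq \binom nk^{-1}u^k \quad\text{at }\xi_0 .
\]
Since $h=u\ell$ and $\ell\in[1-\cos\theta,\sin^2\theta]$, this becomes $u^{p-1-k}\leq C(\theta,f)$ at $\xi_0$. Because $p-1-k<0$, we get $\max u\geq c>0$, and hence $\max h\geq c'$.

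The symmetric argument at a minimum point of $u$, where $b\geq uI$, gives
\[
u^{k-(p-1)}\leq C f\,\ell^{p-1}\quad\text{at the minimum},
\]
so $\min u\leq C$ and $\min h\leq C''$. Combining both bounds with the ratio estimate, $\max h\leq C_1\min h\leq C_1C''$ and $\min h\geq \max h/C_1\geq c'/C_1$. This is \eqref{NEWH}.

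The main point to verify is the extremum argument at boundary points. One needs that $\nabla^2u$ has a sign there as a full matrix, including the mixed entries $u_{\alpha\mu}$. The text after \eqref{eq:Neumann-identities} already records this: at a boundary extremum the tangential derivatives vanish, so $u_{\alpha\mu}=-\cot\theta\,u_\alpha=0$. The other ingredients are the monotonicity and homogeneity of $\sigma_n/\sigma_{n-k}$ and the sign of $p-1-k$. The resulting constant depends on $\theta,k,p,\gamma,f$ (and $n$), as stated.
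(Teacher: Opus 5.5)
Your proposal is correct and follows the paper's proof essentially step by step. The ratio bound comes from feeding Lemma \ref{IUYQ} into Lemma \ref{TREQ}, and the scale is fixed by evaluating the equation at the extrema of $u=h/\ell$, where $b\leq uI$ (resp.\ $b\geq uI$) together with $k+1-p>0$ gives $\max h\geq c$ and $\min h\leq C$. Your additional remarks only make explicit what the paper leaves implicit: choosing $\gamma\in(0,\frac{2(p-1)}{k})$, invoking Lemma \ref{lem:realization-strict} to produce the even capillary body, and checking that the mixed boundary entries $u_{\alpha\mu}$ vanish.
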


\begin{proof}
Let $u=h/\ell$. Then
\begin{equation}\label{Mong-Eq6}
\left\{
\begin{array}{l@{\ }l}
\frac{\sigma_{n}}{\sigma_{n-k}}(\ell\nabla^2 u+\nabla u\otimes \nabla \ell +\nabla \ell\otimes \nabla u+uI)= f (u\ell)^{p-1}, & \quad  {\rm in} \ \mathcal{C}_{\theta}, \\
\nabla_{\mu}u=0, & \quad  {\rm on} \ \partial\mathcal{C}_{\theta}.
\end{array}\right.
\end{equation}
Let $\xi_0\in \mathcal{C}_{\theta}$ be a maximum point of $u$. Using the boundary condition in \eqref{Mong-Eq6}, we find
\[
\nabla u(\xi_0)=0, \quad \nabla^2 u(\xi_0)\leq 0.
\]
Then at $\xi_0$,
\[
\frac{\sigma_{n}}{\sigma_{n-k}}(\ell\nabla^2 u+\nabla u\otimes \nabla \ell +\nabla \ell\otimes \nabla u+uI)\leq \frac{\sigma_n}{\sigma_{n-k}}(uI)=\binom{n}{k}^{-1}u^k.
\]
Using \eqref{Mong-Eq6}, at $\xi_0$, we have
\[
\binom{n}{k}^{-1}u^{k+1-p}\geq f \ell^{p-1}\geq (\min_{\mathcal{C}_{\theta}}f)(1-\cos \theta)^{p-1},
\]
where we used that $1-\cos \theta\leq \ell \leq \sin^2 \theta$. Thus
\begin{equation}\label{CNK}
(\max_{\mathcal{C}_{\theta}}h)^{k+1-p}\geq C\min_{\mathcal{C}_{\theta}}f,
\end{equation}
where $C$ is a positive constant that may change from line to line and depends only on $n,k,\theta,p$.

Similarly, we also obtain
\begin{equation}\label{CNKL}
(\min_{\mathcal{C}_{\theta}}h)^{k+1-p}\leq C\max_{\mathcal{C}_{\theta}}f.
\end{equation}

Lemmas \ref{TREQ} and \ref{IUYQ} imply
\begin{equation}\label{OIL}
     \frac{\max_{\mathcal{C}_{\theta}}h}{\min_{\mathcal{C}_{\theta}}h}  \leq C.
\end{equation}
Combining \eqref{OIL} with \eqref{CNKL} and \eqref{CNK}, we conclude that \eqref{NEWH} holds.
\end{proof}

The $C^0$ estimate yields $C^1$ and $C^2$ estimates, as well as higher-order regularity estimates for $h$ (see \cite{MWW25} for details). Applying the degree-theoretic argument, we then recover the existence result in \cite{HI26}.
\begin{rem}
In addition to the capillary $L_p$ curvature problem, Lemmas \ref{TREQ} and \ref{IUYQ} also allow us to recover the $C^0$ estimate for the even capillary $L_p$ Christoffel-Minkowski problem, provided that $1 < p < k+1$ and $\theta \in (0, \frac{\pi}{2})$ (see \cite{HI25}), by following an argument similar to that in Lemma \ref{NEWC0}.
\end{rem}

\section{Proof of Theorem \ref{maintheo2}}\label{sec5}
\subsection{Proof of Theorem \ref{maintheo2} (i)}
The key is to derive the $C^0$ estimate. By Lemma \ref{TREQ}, it suffices to obtain a gradient estimate for solutions to Eq. \eqref{Mong-Eq}.

\begin{lem}\label{gra-esi}
Let $\theta\in(0,\pi/2)$ and $p,q\in\R$.  Suppose that $h$ is a positive,
smooth, strictly convex solution of Eq. \eqref{Mong-Eq}.  If
\begin{equation}\label{eq:gamma-condition}
 0<\gamma<2,
 \qquad
 \delta:=2(p-1)-\gamma(q-1)>0,
\end{equation}
then
\begin{equation}\label{Gra1}
 \frac{|\nabla h|^2}{h^\gamma}
 \le \widetilde C\left(\max_{\mathcal{C}_{\theta}}h\right)^{2-\gamma}.
\end{equation}
Here $\widetilde C$ depends on $n,\theta,p,q,\gamma,\min f$ and $\|f\|_{C^1}$. 
\end{lem}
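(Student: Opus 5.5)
We apply the maximum principle to the auxiliary function
\[
\Phi=\frac{|\nabla h|^2}{h^\gamma}, \qquad\text{or rather}\qquad \Psi=\frac{|\nabla h|^2+ h^2}{h^\gamma}
\]
on the closed cap $\mathcal{C}_\theta$, following the scheme of \cite{HI24,HI26} (Lemma \ref{IUYQ}). The gradient structure of the right-hand side of \eqref{Mong-Eq} forces us to work with $\rho^2:=h^2+|\nabla h|^2=|X|^2$ rather than $|\nabla h|^2$ alone. We first normalize by scaling. Put $M=\max h$ and $\tilde h=h/M$. Then $\tilde h$ solves \eqref{Mong-Eq} with $f$ replaced by $fM^{p-q}$. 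Since \eqref{Gra1} is scale invariant, we may write $\Psi$ in terms of $h$ and keep track of the constant $M^{p-q}$. Where it matters, we control it using the maximum-principle bounds on $\max h$ and $\min h$ at extremal points of $u=h/\ell$; these are obtained as in Lemma \ref{NEWC0}.

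\textbf{Step 1: boundary.} Suppose the maximum of $\Psi$ is attained at $\xi_0\in\partial\mathcal{C}_\theta$. We compute $\nabla_\mu\Psi$ using $h_\mu=\cot\theta\,h$, the identities \eqref{eq:Neumann-identities}, and the fact that $\partial\mathcal{C}_\theta$ is umbilic in the cap with second fundamental form $\cot\theta$. Tangential differentiation of the Robin condition gives $h_{\alpha\mu}=\cot\theta\,h_\alpha$. Hence $\nabla_\mu|\nabla h|^2$ is a combination of $|\nabla h|^2$ and $h\,h_{\mu\mu}$. The sign of $\nabla_\mu\Psi$ may be bad. If so, we switch to the function $\Psi$ built from $u=h/\ell$, i.e.\ $|\nabla u|^2/u^\gamma$ plus lower-order terms; its Neumann condition $u_\mu=0$ gives $\nabla_\mu|\nabla u|^2=2u_{\alpha}u_{\alpha\mu}=-2\cot\theta|\nabla' u|^2\le0$. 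This contradicts a strict boundary maximum (Hopf), after a small perturbation if needed. So we plan to run the whole argument with $u$ in place of $h$, using $1-\cos\theta\le\ell\le\sin^2\theta$ to pass back to $h$.

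\textbf{Step 2: interior.} At an interior maximum we have $\nabla\Psi=0$ and $\nabla^2\Psi\le0$. We contract with $b^{ij}$, where $b=\nabla^2h+hI$, which is the linearization of $\log\det b$. Write the equation as
\[
\log\det b=\log f+(p-1)\log h+\tfrac{n+1-q}{2}\log\rho^2 .
\]
Differentiating once in the direction $\nabla h$ and using the commutator $b_{ijk}$ symmetric, the first-order condition gives $b_{ik}h_k=h_i\,(\gamma\rho^2/(2h)+\dots)$-type relations, so at $\xi_0$ the vector $\nabla h$ is an eigenvector of $b$. Then $b^{ij}\Psi_{ij}\le0$ produces an inequality of the form
\[
0\ge \frac{2(p-1)-\gamma(q-1)}{h}\,\frac{|\nabla h|^2}{h^{\gamma}}\cdots-\;C\,\frac{|\nabla\log f|\,|\nabla h|}{h^\gamma}-\;(\text{trace terms}),
\]
where the $(n+1-q)$ term contributes $(n+1-q)\,\langle\nabla\rho^2,\nabla h\rangle/\rho^2$. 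Using the eigenvector relation, $\nabla\rho^2=2b\nabla h$, so this term equals $2(n+1-q)b_{11}|\nabla h|^2/\rho^2$. This is the term to be estimated from both sides: $0\le |\nabla h|^2/\rho^2\le1$, combined with $b_{11}$ expressed through the first-order condition. The trace terms $\sum b^{ii}$ and $b_{11}$ are handled as in \cite{HI24}: the good term $\delta|\nabla h|^2/h^2$ dominates once $\Psi$ is large relative to $M^{2-\gamma}$. This yields \eqref{Gra1}.

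\textbf{Main obstacle.} The main obstacle is the sign bookkeeping of the $(n+1-q)$ term for arbitrary $q$. The coefficient $\delta=2(p-1)-\gamma(q-1)$ should come out exactly after combining the $(p-1)$ term with $-(q-1)$ contributions. Those contributions arise as $(n+1-q)$ minus the $n$ from differentiating $\det b$ against $hI$. If the naive bound fails, we will try bounding $|\nabla h|^2/\rho^2$ by $1$ when $n+1-q<0$ and by $0$ otherwise, choosing whichever side keeps the favorable sign.
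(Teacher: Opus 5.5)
\textbf{Verdict: there is a genuine gap in the interior step.} Your outline matches the paper's: a maximum principle for a weighted gradient quantity, a switch to $u=h/\ell$ because of the Robin condition, and $\delta$ arising as $2(p-1)-n\gamma+(n+1-q)\gamma$. But Steps 1 and 2 are carried out for different functions, and the interior computation does not transfer from one to the other.

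The relation $b\nabla h\parallel\nabla h$, which lets you write the $q$-term as $2(n+1-q)b_{11}|\nabla h|^2/\rho^2$, holds at critical points of $|\nabla h|^2/h^\gamma$ or $\rho^2/h^\gamma$. For these functions the boundary derivative has no sign. Since $h_{\alpha\mu}=0$ on $\partial\mathcal{C}_\theta$ (not $\cot\theta\,h_\alpha$), one gets $(\log\Psi)_\mu=\cot\theta\,(2hh_{\mu\mu}/|\nabla h|^2-\gamma)$, respectively $\cot\theta\,(2hb_{\mu\mu}/\rho^2-\gamma)$.

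For a quantity built from $u$, the first-order condition controls only $\nabla^2u\,\nabla u$. Meanwhile $\nabla h=\ell\nabla u+u\nabla\ell$ and $b=\ell\nabla^2u+\nabla u\otimes\nabla\ell+\nabla\ell\otimes\nabla u+uI$, so $\nabla h$ is no longer an eigenvector of $b$. Contracting with $\sigma_n^{ij}=\sigma_nb^{ij}$ then produces terms such as $(2-\gamma)(\gamma-1)\sigma_n^{ij}\ell_i\ell_j/\ell^2$ and $2(2-\gamma)\gamma\,\sigma_n^{ij}u_i\ell_j/(u\ell)$. These have order-one coefficients and involve the uncontrolled $b^{ij}$, so they are not dominated by the main term $fh^{p-2}\rho^{n+1-q}$.

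The unspecified ``lower-order terms'' also matter. For the plain $|\nabla u|^2/u^\gamma$, the coefficient of $\sigma_n^{ij}\delta_{ij}$ comes out as $(\gamma-2(1-\ell))/\ell$, which is negative whenever $\gamma<2\cos^2\theta$. Deferring all of this to ``as in \cite{HI24}'' leaves the core of the proof missing.

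\textbf{How the paper closes it.} It takes $\Phi=\ell^{2-\gamma}|\nabla u|^2/u^\gamma$. The weight gives $(\log\Phi)_\mu=-\gamma\cot\theta<0$ on the boundary. It also adds $(2-\gamma)(1-\ell)/\ell$ to the trace, which makes the coefficient of $\sigma_n^{ij}\delta_{ij}$ exactly $\gamma$.

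Cauchy--Schwarz applied to $\sigma_n^{ij}u_{ki}u_{kj}$ through the first-order condition, added to the three order-one quadratic terms, gives exactly $\frac{(2-\gamma)\gamma}{2}\sigma_n^{ij}(u_i/u+\ell_i/\ell)(u_j/u+\ell_j/\ell)\ge0$.

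The contradiction hypothesis forces $|\nabla u|^2/u^2>M$ at the maximum point. Hence every remaining $\ell$- and $f$-term is smaller by a factor $O(u/|\nabla u|)=O(M^{-1/2})$. These are absorbed by half of that square, half of $\gamma\sigma_n^{ij}\delta_{ij}$, and the main term.

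The same large-gradient regime gives $\rho^2/(\ell^2|\nabla u|^2)=1+O(M^{-1/2})$. So the $q$-term equals $(n+1-q)(\gamma+O(M^{-1/2}))fh^{p-2}\rho^{n+1-q}$ for either sign of $n+1-q$, and this two-sided control is what produces $\delta$. Your fallback of bounding $|\nabla h|^2/\rho^2$ below by $0$ when $n+1-q>0$ discards the $(n+1-q)\gamma$ needed to turn $-n\gamma$ into $-\gamma(q-1)$. For $q<n+1$ it would therefore require more than $\delta>0$.

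\textbf{The rescaling step is unnecessary and unavailable.} After factoring out $fh^{p-2}\rho^{n+1-q}$, only $|\nabla\log f|$ enters, so no control of $\max h$ or $\min h$ is needed. Nor is such control available. For $q>p$ the extremum comparisons bound $\max h$ only from below. Moreover, this lemma is precisely what is used later to obtain the $C^0$ estimate, so invoking $C^0$ bounds here would be circular.
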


\begin{proof}
Set $u=h/\ell$ and consider the auxiliary function
\[
 \Phi=\frac{\ell^{2-\gamma}|\nabla u|^2}{u^\gamma}.
\]
It suffices to prove
\begin{equation}\label{eq:u-gradient-target}
 \Phi\le M\left(\max_{\mathcal{C}_{\theta}}\ell\right)^{2-\gamma}
            \left(\max_{\mathcal{C}_{\theta}}u\right)^{2-\gamma}.
\end{equation}
Here $M>1$ will be chosen using only the data listed in the statement.
If \eqref{eq:u-gradient-target} fails, let $\xi_0$ be a maximum point of
$\Phi$.  Since $2-\gamma>0$, it follows that
\begin{equation}\label{eq:large-gradient}
\begin{split}
 \frac{|\nabla u|^2}{u^2}(\xi_0)
 ={}&\frac{\Phi(\xi_0)}{\ell(\xi_0)^{2-\gamma}u(\xi_0)^{2-\gamma}}\\
 >{}&M\left(\frac{\max_{\mathcal{C}_{\theta}}\ell}{\ell(\xi_0)}\right)^{2-\gamma}
    \left(\frac{\max_{\mathcal{C}_{\theta}}u}{u(\xi_0)}\right)^{2-\gamma}
 \ge M.
 \end{split}
\end{equation}

If $\xi_0\in\partial\mathcal{C}_\theta$, apply the boundary identities in \eqref{eq:fixed-cap-identities} and \eqref{eq:Neumann-identities}.
Since $|\nabla u|(\xi_0)>0$ by \eqref{eq:large-gradient}, $\log\Phi$ is smooth near $\xi_0$.  At a boundary
maximum its outward normal derivative is nonnegative.  Consequently,
\[
 0\le(\log\Phi)_\mu
 =\frac{2u_\alpha u_{\alpha\mu}}{|\nabla u|^2}
   +(2-\gamma)\frac{\ell_\mu}{\ell}
   -\gamma\frac{u_\mu}{u}
 =-\gamma\cot\theta<0,
\]
a contradiction.  Thus $\xi_0$ is an interior point.

At $\xi_0$, choose an orthonormal frame diagonalizing
$b_{ij}=h_{ij}+h\delta_{ij}$ and use the summation convention.  Set
\[
 \rho^2=h^2+|\nabla h|^2.
\]
The first derivative condition is
\begin{equation}\label{eq:Phi-first}
 0=(\log\Phi)_i
 =\frac{2u_k u_{ki}}{|\nabla u|^2}
   +(2-\gamma)\frac{\ell_i}{\ell}
   -\gamma\frac{u_i}{u}, \quad i=1,\ldots,n.
\end{equation}
The Hessian $\nabla^2\log\Phi$ is negative semidefinite. Differentiating
and using $\ell_{ij}+\ell\delta_{ij}=\delta_{ij}$ gives its components:
\begin{equation}
\begin{split}    
(\log\Phi)_{ij}
={}&\frac{2(u_k u_{kij}+u_{ki}u_{kj})}{|\nabla u|^2}
 -\frac{(2-\gamma)(3-\gamma)}{\ell^2}\ell_i\ell_j\\
&+(2-\gamma)\gamma\frac{u_i\ell_j+u_j\ell_i}{u\ell}
 +(\gamma-\gamma^2)\frac{u_i u_j}{u^2}\\
&
 +(2-\gamma)\frac{1-\ell}{\ell}\delta_{ij}
 -\gamma\frac{u_{ij}}u.
\label{eq:Phi-second}
\end{split}
\end{equation}
Here \eqref{eq:Phi-first} was used to replace the square of the first
derivative of $|\nabla u|^2$.

Let
\[
 \sigma_n=\det(b_{ij}),\qquad
 \sigma_n^{ij}=\frac{\partial\sigma_n}{\partial b_{ij}}
             =\sigma_n b^{ij}.
\]
Then $\sigma_n^{ij}$ is positive definite and
\begin{equation}\label{eq:cofactor-Euler}
 \sigma_n^{ij}b_{ij}=n\sigma_n.
\end{equation}
Since $h=u\ell$,
\begin{equation}\label{eq:b-u}
 b_{ij}=\ell u_{ij}+u_i\ell_j+u_j\ell_i+u\delta_{ij}.
\end{equation}
Then \eqref{eq:cofactor-Euler}--\eqref{eq:b-u} imply
\begin{equation}\label{eq:cofactor-uij}
 \sigma_n^{ij}u_{ij}
 =\frac n\ell\sigma_n-\frac2\ell\sigma_n^{ij}u_i\ell_j
  -\frac u\ell\sigma_n^{ij}\delta_{ij}.
\end{equation}
This explains, in particular, the contribution
\[
 -\frac{n\gamma\sigma_n}{h}
 =-n\gamma f h^{p-2}\rho^{n+1-q}
\]
in the contracted second derivative inequality.

For completeness, differentiating the equation once gives the compact
identity
\begin{equation}\label{eq:PDE-first}
 \sigma_n^{ij}\nabla_m b_{ij}
 =\sigma_n\left[
  (\log f)_m+(p-1)\frac{h_m}{h}
  +(n+1-q)\frac{hh_m+h_kh_{km}}{\rho^2}\right].
\end{equation}
The remaining substitutions use
\begin{align}
 u_{kij}&=u_{ijk}+u_k\delta_{ij}-u_j\delta_{ik},
 \label{eq:u-commute}\\
 u_m\ell_{jm}&=(1-\ell)u_j,\label{eq:ell-contract}\\
 u_mh_m&=\ell|\nabla u|^2+u u_m\ell_m,
 \label{eq:h-first-contract}\\
 u_mh_kh_{km}
 &=\ell u_mu_kb_{km}-h\ell|\nabla u|^2
   +u u_m\ell_kb_{km}-uh u_m\ell_m.
\label{eq:h-second-contract}
\end{align}
Differentiating \eqref{eq:b-u},
contracting with $u_m\sigma_n^{ij}$, and using
\eqref{eq:Phi-first} and \eqref{eq:ell-contract}, we get
\begin{equation}
\begin{split}
\sigma_n^{ij}u_m u_{ijm}
={}&\frac{\sigma_n}{\ell}u_m
 \left[(\log f)_m+(p-1)\frac{h_m}{h}\right] \\
&+\frac{(n+1-q)\sigma_n}{\ell\rho^2}
  u_m(hh_m+h_kh_{km})\\
&-\frac{u_m\ell_m}{\ell}\sigma_n^{ij}u_{ij}
 -\frac{2}{\ell}\sigma_n^{ij}(u_m u_{mi})\ell_j\\
&
 -\frac{2(1-\ell)}{\ell}\sigma_n^{ij}u_i u_j
 -\frac{|\nabla u|^2}{\ell}\sigma_n^{ij}\delta_{ij},
\label{eq:third-contraction-raw}
\end{split}
\end{equation}
where
\begin{equation}\label{eq:umumi}
 u_m u_{mi}=\frac{|\nabla u|^2}{2}
             \left(\gamma\frac{u_i}{u}
                   -(2-\gamma)\frac{\ell_i}{\ell}\right).
\end{equation}
The spherical commutator \eqref{eq:u-commute} further implies
\begin{equation}\label{eq:third-commuted}
 \sigma_n^{ij}u_m u_{mij}
 =\sigma_n^{ij}u_m u_{ijm}
   +|\nabla u|^2\sigma_n^{ij}\delta_{ij}-\sigma_n^{ij}u_i u_j.
\end{equation}

Substitute \eqref{eq:cofactor-uij},
\eqref{eq:h-first-contract}--\eqref{eq:third-commuted} into the contraction
of \eqref{eq:Phi-second} with $\sigma_n^{ij}$.  Writing every term explicitly
gives
\begin{align}
0\ge{}&
 \frac{2h^{p-1}\rho^{n+1-q}}
      {\ell|\nabla u|^2}\,u_m f_m
 +\frac2{|\nabla u|^2}\sigma_n^{ij}u_{ki}u_{kj}\notag\\
&+\frac{2(p-1-n)fh^{p-1}\rho^{n+1-q}}
        {\ell^2|\nabla u|^2}\,u_m\ell_m
 +\frac{(2-\gamma)(\gamma-1)}{\ell^2}
       \sigma_n^{ij}\ell_i\ell_j\notag\\
&+\left(2(p-1)-n\gamma\right)f h^{p-2}\rho^{n+1-q}
 +\left(\frac{\gamma-\gamma^2}{u^2}
       -\frac{2(2-\ell)}{\ell|\nabla u|^2}\right)
       \sigma_n^{ij}u_i u_j\notag\\
&+\left(\frac{4u_m\ell_m}{\ell^2|\nabla u|^2}
       +\frac{2(2-\gamma)\gamma}{u\ell}\right)
       \sigma_n^{ij}u_i\ell_j+\left(\frac{2u u_m\ell_m}{\ell^2|\nabla u|^2}
       +\gamma\right)\sigma_n^{ij}\delta_{ij}\notag\\
&+\frac{2(n+1-q)fh^{p-1}\rho^{n-1-q}}
          {\ell|\nabla u|^2}
 \left(\ell u_mu_kb_{km}+u u_m\ell_kb_{km}\right).
\label{eq:contracted-full}
\end{align}

We next estimate the terms in \eqref{eq:contracted-full} that do not carry
the factor $n+1-q$.  By \eqref{eq:large-gradient},
\begin{equation}\label{eq:small-ratio}
 \frac{u}{|\nabla u|}\le M^{-1/2}.
\end{equation}
By \eqref{eq:Phi-first} and Cauchy--Schwarz in the $k$-index,
\begin{equation}\label{eq:Hessian-projection}
 \frac2{|\nabla u|^2}\sigma_n^{ij}u_{ki}u_{kj}
 \ge\frac12\sigma_n^{ij}
       \left(\gamma\frac{u_i}{u}-(2-\gamma)\frac{\ell_i}{\ell}\right)
       \left(\gamma\frac{u_j}{u}-(2-\gamma)\frac{\ell_j}{\ell}\right).
\end{equation}
Adding to \eqref{eq:Hessian-projection} the three corresponding quadratic
terms from \eqref{eq:contracted-full} gives exactly
\begin{align}
&\frac12\sigma_n^{ij}
 \left(\gamma\frac{u_i}{u}-(2-\gamma)\frac{\ell_i}{\ell}\right)
 \left(\gamma\frac{u_j}{u}-(2-\gamma)\frac{\ell_j}{\ell}\right)\notag\\
&\quad+(2-\gamma)(\gamma-1)\sigma_n^{ij}
       \frac{\ell_i\ell_j}{\ell^2}
 +(\gamma-\gamma^2)\sigma_n^{ij}\frac{u_i u_j}{u^2}+2(2-\gamma)\gamma\sigma_n^{ij}
       \frac{u_i\ell_j}{u\ell}\notag\\
 ={}&\frac{(2-\gamma)\gamma}{2}\sigma_n^{ij}
   \left(\frac{u_i}{u}+\frac{\ell_i}{\ell}\right)
   \left(\frac{u_j}{u}+\frac{\ell_j}{\ell}\right)\ge0.
\label{eq:key-square}
\end{align}
Thus \eqref{eq:contracted-full} contains, in addition to
\eqref{eq:key-square}, the positive term
$\gamma\sigma_n^{ij}\delta_{ij}$.

To estimate the remaining terms, use \eqref{eq:fixed-cap-identities}
and $|\nabla\ell|\le C$. Then $|u_m\ell_m|\le C|\nabla u|$ and
\[
 \sigma_n^{ij}\frac{\ell_i\ell_j}{\ell^2}
 \le C\sigma_n^{ij}\delta_{ij}.
\]
Writing
$u_i/u=(u_i/u+\ell_i/\ell)-\ell_i/\ell$ and using the positive
definiteness of $(\sigma_n^{ij})$, for every fixed $\tau>0$ we have
\begin{align*}
 \sigma_n^{ij}\frac{u_i u_j}{u^2}
 &\le2\sigma_n^{ij}
       \left(\frac{u_i}{u}+\frac{\ell_i}{\ell}\right)
       \left(\frac{u_j}{u}+\frac{\ell_j}{\ell}\right)
      +C\sigma_n^{ij}\delta_{ij},\\
 \left|\sigma_n^{ij}\frac{u_i\ell_j}{u\ell}\right|
 &\le\tau\sigma_n^{ij}
       \left(\frac{u_i}{u}+\frac{\ell_i}{\ell}\right)
       \left(\frac{u_j}{u}+\frac{\ell_j}{\ell}\right)
      +C_\tau\sigma_n^{ij}\delta_{ij}.
\end{align*}
The three residual coefficients satisfy
\begin{align*}
 \left|\frac{2(2-\ell)u^2}{\ell|\nabla u|^2}\right|
 \le \frac{C}{M}, \quad
 \left|\frac{4u(u_m\ell_m)}{\ell|\nabla u|^2}\right|
 +\left|\frac{2u(u_m\ell_m)}{\ell^2|\nabla u|^2}\right|
 \le \frac{C}{\sqrt M}.
\end{align*}
Hence the preceding two inequalities and Young's inequality imply that for $M>0$
sufficiently large, we have
\begin{equation}
\begin{split}
&-\frac{2(2-\ell)}{\ell|\nabla u|^2}\sigma_n^{ij}u_i u_j
 +\frac{4u_m\ell_m}{\ell^2|\nabla u|^2}\sigma_n^{ij}u_i\ell_j
 +\frac{2u u_m\ell_m}{\ell^2|\nabla u|^2}
   \sigma_n^{ij}\delta_{ij}\\
&\qquad\ge
 -\frac{(2-\gamma)\gamma}{4}\sigma_n^{ij}
  \left(\frac{u_i}{u}+\frac{\ell_i}{\ell}\right)
  \left(\frac{u_j}{u}+\frac{\ell_j}{\ell}\right)
 -\frac\gamma2\sigma_n^{ij}\delta_{ij}.
\label{eq:small-geometric}
\end{split}
\end{equation}
The two data terms in the first and third terms of
\eqref{eq:contracted-full} are also small.  Indeed, using
$h=u\ell$, $|u_mf_m|\le|\nabla u|\,|\nabla f|$, and
$|u_m\ell_m|\le C|\nabla u|$, we obtain
\begin{equation}
\begin{split}
\left|
 \frac{2h^{p-1}\rho^{n+1-q}}
      {\ell|\nabla u|^2}\,u_m f_m
 \right|
&\le \frac{2u}{|\nabla u|}\frac{|\nabla f|}{f}
 f h^{p-2}\rho^{n+1-q}\le \frac{C}{\sqrt M}
 f h^{p-2}\rho^{n+1-q},\label{eq:f-error}
\end{split}
\end{equation}
and
\begin{equation}
\begin{split}
\left|
 \frac{2(p-1-n)fh^{p-1}\rho^{n+1-q}}
      {\ell^2|\nabla u|^2}\,u_m\ell_m
 \right|
&\le C\frac{u}{|\nabla u|}
 f h^{p-2}\rho^{n+1-q}\le \frac{C}{\sqrt M}
 f h^{p-2}\rho^{n+1-q}.
\label{eq:ell-error}
\end{split}
\end{equation}
Combining \eqref{eq:key-square}--\eqref{eq:ell-error} shows that the sum of all the terms in \eqref{eq:contracted-full} except its last, $q$-dependent term is bounded below by
\begin{equation}\label{eq:q-independent-estimate}
 \left(2(p-1)-n\gamma-\frac C{\sqrt M}\right)
 f h^{p-2}\rho^{n+1-q}.
\end{equation}

It remains to estimate the term containing $n+1-q$ from both sides.
By \eqref{eq:b-u} and \eqref{eq:Phi-first},
\begin{align}
\ell u_mu_kb_{km}
={}&\frac{\gamma\ell^2}{2u}|\nabla u|^4
 +\left(1+\frac\gamma2\right)\ell|\nabla u|^2u_m\ell_m
 +u\ell|\nabla u|^2,
\label{eq:T1}\\
u u_m\ell_kb_{km}
={}&\left(\frac{\gamma\ell}{2}|\nabla u|^2+u^2\right)u_m\ell_m
 +\frac{\gamma u}{2}|\nabla u|^2|\nabla\ell|^2
 +u(u_m\ell_m)^2.
\label{eq:T2}
\end{align}
Since $h=u\ell$, substituting \eqref{eq:T1}--\eqref{eq:T2} shows that
the last term in \eqref{eq:contracted-full} equals
\begin{equation}
\begin{split}
&2(n+1-q)\frac{u}{\rho^2}
 \left[
  \frac{\gamma\ell^2}{2u}|\nabla u|^2
  +(1+\gamma)\ell u_m\ell_m
  +\frac{u^2u_m\ell_m}{|\nabla u|^2}\right.\\
&\hspace{33mm}\left.
  +\ell u+\frac{\gamma u}{2}|\nabla\ell|^2
  +\frac{u}{|\nabla u|^2}(u_m\ell_m)^2
 \right]\cdot
 f h^{p-2}\rho^{n+1-q}.
\label{eq:q-term-expanded}
\end{split}
\end{equation}

The following two expansions compare its leading coefficient with
$\gamma$:
\begin{align}
\frac{\rho^2}{\ell^2|\nabla u|^2}
&=1+\frac{2u u_m\ell_m}{\ell|\nabla u|^2}
 +\frac{u^2}{|\nabla u|^2}
  \left(1+\frac{|\nabla\ell|^2}{\ell^2}\right)
 =1+O\left(\frac{u}{|\nabla u|}
             +\frac{u^2}{|\nabla u|^2}\right),\label{eq:rho-ratio}
\end{align}
and
\begin{equation}
\begin{split}
&\frac{2u}{\ell^2|\nabla u|^2}
 \left[
  \frac{\gamma\ell^2}{2u}|\nabla u|^2
  +(1+\gamma)\ell u_m\ell_m
  +\frac{u^2u_m\ell_m}{|\nabla u|^2}\right.\\
&\hspace{31mm}\left.
  +\ell u+\frac{\gamma u}{2}|\nabla\ell|^2
  +\frac{u}{|\nabla u|^2}(u_m\ell_m)^2
 \right] \\
&\qquad=\gamma+O\left(
 \frac{u}{|\nabla u|}+\frac{u^2}{|\nabla u|^2}
 +\frac{u^3}{|\nabla u|^3}\right).
\label{eq:q-ratio-ell}
\end{split}
\end{equation}
Since $|u_m\ell_m|\le C|\nabla u|$, \eqref{eq:small-ratio} bounds
both remainders in \eqref{eq:rho-ratio}--\eqref{eq:q-ratio-ell} by
$CM^{-1/2}$. For large $M$, the ratio in \eqref{eq:rho-ratio} lies
between $1/2$ and $3/2$. Dividing the two expansions therefore shows
that the coefficient of $(n+1-q)fh^{p-2}\rho^{n+1-q}$ in
\eqref{eq:q-term-expanded} equals $\gamma+O(M^{-1/2})$.
For either sign of $n+1-q$, the last term in \eqref{eq:contracted-full}
is consequently bounded below by
\[
 \left((n+1-q)\gamma
       -\frac{C|n+1-q|}{\sqrt M}\right)
 f h^{p-2}\rho^{n+1-q}.
\]
Combining this with \eqref{eq:q-independent-estimate} in
\eqref{eq:contracted-full},
\[
 0\ge
 \left(2(p-1)-\gamma(q-1)-\frac C{\sqrt M}\right)
 f h^{p-2}\rho^{n+1-q}.
\]
Absorb $|n+1-q|$ into $C$ and choose $M$ above all preceding
thresholds with $M>(2C/\delta)^2$. The coefficient is then at least
$\delta/2>0$, a contradiction. Thus \eqref{eq:u-gradient-target} holds.

Using
$\nabla h=\ell\nabla u+u\nabla\ell$, the positive upper and lower bounds for
$\ell$, and $|\nabla\ell|\le C$, we obtain
\begin{align*}
 \frac{|\nabla h|^2}{h^\gamma}
 &\le C\left(
   \ell^{2-\gamma}\frac{|\nabla u|^2}{u^\gamma}
   +\ell^{-\gamma}u^{2-\gamma}|\nabla\ell|^2\right)\\
 &\le C\left(\frac{|\nabla u|^2}{u^\gamma}+u^{2-\gamma}\right)\\
 &\le C\left(\max_{\mathcal{C}_{\theta}}u\right)^{2-\gamma}
 \le C\left(\max_{\mathcal{C}_{\theta}}h\right)^{2-\gamma}.
\end{align*}
Here the inequality follows from \eqref{eq:u-gradient-target},
and the last one uses
$\max_{\mathcal{C}_{\theta}}u\le(\min_{\mathcal{C}_{\theta}}\ell)^{-1}\max_{\mathcal{C}_{\theta}}h$. Then the desired  \eqref{Gra1} follows.
\end{proof}

\begin{rem}\label{rem:gradient-range}
There exists $0<\gamma<2$ satisfying \eqref{eq:gamma-condition}
if and only if $p>\min\{1,q\}$. For $q>1$ this requires $p>1$ and
$0<\gamma<\min\{2,2(p-1)/(q-1)\}$. For $q=1$ it requires $p>1$.
For $q<1$, the supremum of $2(p-1)-\gamma(q-1)$ over $0<\gamma<2$
is $2(p-q)$, so the condition is equivalent to $p>q$.
In particular, under $p\leq q$, the lemma applies precisely when
$1<p\leq q$. If $p<q<1$, then
\[
2(p-1)-\gamma(q-1)<2(p-q)<0,
\]
so this argument gives no extension to that range.
The gradient estimate itself does not require evenness; that assumption
enters through Lemma \ref{TREQ}. A fixed positive lower bound for
$\delta$ is needed to absorb the error terms uniformly.
\end{rem}

\begin{lem}\label{C0p}
Let $n\geq2$, $1<p<q<\infty$ and $\theta\in (0,\frac{\pi}{2})$.
Suppose $h$ is a positive, even, smooth and strictly convex solution of
\eqref{Mong-Eq}. There is a constant $C$, depending only on
$n,\theta,p,q,\min f$ and $\|f\|_{C^1}$, such that
\begin{equation}\label{TRPA}
C^{-1}\leq h\leq C,\qquad |\nabla h|\leq C.
\end{equation}
\end{lem}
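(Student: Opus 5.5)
The plan is to combine three ingredients already in place: the gradient estimate of Lemma \ref{gra-esi}, the non-collapsing estimate of Lemma \ref{TREQ}, and a maximum-principle bound on $\max h$ and $\min h$ coming from the equation itself. This mirrors the proof of Lemma \ref{NEWC0}.

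\emph{Step 1: gradient estimate.} Since $1<p<q$, I fix
\[
\gamma:=\min\Bigl\{1,\tfrac{p-1}{q-1}\Bigr\}\in(0,2).
\]
Then $\delta=2(p-1)-\gamma(q-1)\geq p-1>0$. Both $\gamma$ and $\delta$ depend only on $p,q$, so Lemma \ref{gra-esi} gives \eqref{Gra1} with a constant $\widetilde C(n,\theta,p,q,\min f,\|f\|_{C^1})$. The solution is even, so Lemma \ref{lem:realization-strict} shows that $h$ is the capillary support function of an even, smooth, strictly convex capillary body. Lemma \ref{TREQ} then applies with $N=\widetilde C$ and yields
\[
\max h\leq C_1\min h .
\]

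\emph{Step 2: one-sided bounds from the equation.} I pass to $u=h/\ell$, which satisfies $u_\mu=0$. At a maximum point $\xi_0$ of $u$ we have $\nabla u=0$ and $\nabla^2u\leq0$. By \eqref{eq:b-u} this gives $b\leq uI$ at $\xi_0$, so $\det b\leq u^n$. Also $\nabla h=u\nabla\ell$ there, so
\[
\rho^2=u^2(\ell^2+|\nabla\ell|^2).
\]
Since $\ell^2+|\nabla\ell|^2=|\xi-\cos\theta\, e|^2$ is bounded above and below by positive constants depending on $\theta$, the equation at $\xi_0$ gives
\[
u^n\geq \min f\,(u\ell)^{p-1}\,c_\theta\, u^{n+1-q}.
\]
This holds for either sign of $n+1-q$, after adjusting $c_\theta$. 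Hence $u^{q-p}\geq c\min f$, that is, $\max h\geq c(\min f)^{1/(q-p)}$.

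The same argument at a minimum of $u$ uses $b\geq uI$ and gives $u^{q-p}\leq C\max f$, that is, $\min h\leq C(\max f)^{1/(q-p)}$. Here $\max f\leq\|f\|_{C^1}$.

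\emph{Step 3: two-sided bounds.} Combining Steps 1 and 2 gives
\[
h\leq \max h\leq C_1\min h\leq C
\qquad\text{and}\qquad
h\geq\min h\geq C_1^{-1}\max h\geq C^{-1}.
\]
The gradient bound then follows from \eqref{eq:geometric-gradient-bound}, $|\nabla h|\leq\sqrt{1+\csc^2\theta}\,\max h$, or equally from \eqref{Gra1}.

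The only delicate point is the maximum-principle step, since the right-hand side of \eqref{Mong-Eq} involves $\nabla h$, which does not vanish at extrema of $h$. Working with $u$ resolves this: at extrema of $u$ the quantity $\rho$ is comparable to $u$, with constants depending only on $\theta$. The exponent $q-p>0$ then points in the right direction, giving a lower bound for the maximum and an upper bound for the minimum. These are exactly what must be paired with the ratio bound of Lemma \ref{TREQ}. Evenness is used only to invoke Lemma \ref{TREQ}.
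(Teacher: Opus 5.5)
Your proposal is correct and follows the paper's proof essentially step for step. It uses the same $\gamma=(p-1)/(q-1)$ (the minimum with $1$ is redundant since $p<q$), Lemmas \ref{gra-esi} and \ref{TREQ} for the ratio bound, and the extremum comparison for $u=h/\ell$ giving $(\max u)^{q-p}\geq a_-$ and $(\min u)^{q-p}\leq a_+$. One harmless slip: $\ell^2+|\nabla\ell|^2=|\xi|^2$, because the inverse capillary Gauss map of $\mathcal{C}_\theta$ is the identity, not $|\xi-\cos\theta\,e|^2\equiv1$; it still lies in $[(1-\cos\theta)^2,\sin^2\theta]$, so the two-sided bound you need holds.
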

\begin{proof}
Choose $\gamma=(p-1)/(q-1)\in(0,1)$, so that $\delta=p-1>0$.
Lemmas \ref{gra-esi} and \ref{TREQ} give the ratio bound \eqref{maxmin}.
It remains to fix the scale. Put $u=h/\ell$ and
\begin{equation}\label{eq:extremum-weight}
\begin{aligned}
a(\xi)&=f(\xi)\ell(\xi)^{p-1}
       (\ell^2+|\nabla\ell|^2)^{(n+1-q)/2},\\
a_-&:=\min a>0,\qquad a_+:=\max a.
\end{aligned}
\end{equation}
The boundary-extremum observation following \eqref{eq:Neumann-identities} gives $\nabla u=0$ and $\nabla^2u\leq0$
at a closed-cap maximum, and the reversed Hessian inequality at a minimum.
Consequently \eqref{eq:b-u} implies $b\leq uI$ at a maximum and
$b\geq uI$ at a minimum. The equation becomes
$\det b=a u^{n+p-q}$ at either extremum, so
\begin{equation}\label{PQU}
(\max u)^{q-p}\geq a_-,
\end{equation}
and
\begin{equation}\label{PQL}
(\min u)^{q-p}\leq a_+.
\end{equation}
The derivation of \eqref{PQU}--\eqref{PQL} uses only positivity
and admissibility, so these comparisons hold for all real $p,q$.
In the present case $q-p>0$; the fixed positive bounds for $\ell$ give a positive lower bound for $\max h$ and an
upper bound for $\min h$. Combining them with \eqref{maxmin} gives
the two scale bounds in \eqref{TRPA}. Equation \eqref{Gra1} then bounds
$|\nabla h|$. All constants are locally uniform when the parameters
remain in $1<p<q<\infty$ and the positive data bounds are uniform.
\end{proof}

Next, we derive a $C^2$ estimate under the $C^0$ and $C^1$ bounds.\begin{lem}\label{C2p}
Let $n\geq2$, $\theta\in (0,\frac{\pi}{2})$, and let $p,q\in\mathbb R$ be fixed.
Suppose $h$ is a positive, smooth and strictly convex solution of
\eqref{Mong-Eq} satisfying
\[
0<m_0\leq h\leq M_0,\qquad |\nabla h|\leq M_1.
\]
Then
\[
C^{-1}I\leq\nabla^2h+hI\leq CI,
\]
where $C$ depends only on $n,\theta,p,q,m_0,M_0,M_1,\min f$
and $\|f\|_{C^2}$. No evenness assumption or sign restriction on
$p-1$ or $n+1-q$ is required.
\end{lem}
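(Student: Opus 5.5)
The plan follows the paper's announced test function
\[
Q=\log\sigma_1+Ah+B|\nabla u|^2,\qquad \sigma_1=\operatorname{tr} b=\Delta h+nh,\quad u=h/\ell .
\]
It suffices to prove an upper bound $\sigma_1\le C$. Indeed, the right-hand side of \eqref{Mong-Eq}, namely $F:=fh^{p-1}\rho^{n+1-q}$ with $\rho^2=h^2+|\nabla h|^2$, is pinched between positive constants depending only on $m_0,M_0,M_1,\min f,\max f$. Both signs of $p-1$ and $n+1-q$ are allowed here, since $h\ge m_0$ and $\rho\ge m_0$. So an upper bound on the eigenvalues of $b$ together with $\det b\ge c>0$ gives the lower bound $b\ge C^{-1}I$. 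We fix $A,B$ large and take a maximum point $\xi_0$ of $Q$ on the closed cap. We then treat the boundary and the interior separately, with the aim of showing $\sigma_1(\xi_0)\le C$.

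\emph{Boundary case.} If $\xi_0\in\partial\mathcal C_\theta$, we use $Q_\mu\ge0$. We compute $(\sigma_1)_\mu$ by differentiating the Robin condition tangentially, as in \eqref{eq:Neumann-identities}. The mixed terms $h_{\alpha\mu}$ are expressed through $\cot\theta\,h_\alpha$ and the geodesic curvature of $\partial\mathcal C_\theta$, which is $\cot\theta$. The normal derivative $h_{\alpha\alpha\mu}$ is then controlled by $-\cot\theta\, b_{\alpha\alpha}+C$, and $h_{\mu\mu\mu}$ is controlled through the differentiated equation. This should give $(\log\sigma_1)_\mu\le -c\cot\theta+C/\sigma_1$.

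Next, $h_\mu=\cot\theta\,h$ gives $Ah_\mu=A\cot\theta\,h>0$, which has the wrong sign. The term $B(|\nabla u|^2)_\mu=2Bu_\alpha u_{\alpha\mu}=-2B\cot\theta|\nabla u|^2$ has the right sign but may vanish. Because of this, I would either reflect the roles of the terms, replacing $A h$ by $-A$ times a function with favorable normal derivative (for instance $A\ell$-type corrections, since $\ell_\mu=\cot\theta\,\ell$), or use the standard device of MWW25 for this Robin condition: adding $A\,\ell\,$-weighted terms so that the boundary contribution becomes strictly negative once $\sigma_1$ is large. This would force $\sigma_1(\xi_0)\le C$. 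I expect this boundary step to be the main obstacle, and I would first try to match the computations of \cite{MWW25} for the capillary Minkowski problem.

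\emph{Interior case.} At an interior $\xi_0$ we choose a frame with $b$ diagonal and apply the linearized operator $L=\sigma_n^{ij}\nabla_{ij}$, working with $\log\det b=\log F$. The sequence of steps is as follows.
\begin{itemize}
\item[(i)] Differentiate the equation twice and commute derivatives on the sphere. This yields
\[
\sigma_n^{ii}(\sigma_1)_{ii}\ge \sigma_n\Delta\log F+\sigma_n^{ij,kl}\text{-terms}+\sigma_n\sum b^{ii}\,\sigma_1-C\sigma_1\sigma_n^{ii}.
\]
The concavity term of $\log\det$ absorbs $|\nabla\sigma_1|^2/\sigma_1^2$ by the usual argument.
\item[(ii)] The dangerous term is $\Delta\log F$. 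It contains $(n+1-q)\Delta\log\rho^2$, hence $(n+1-q)\rho^{-2}h_kh_{kii}$ with third derivatives, and a quadratic term $\rho^{-2}h_{ki}^2\sim\sigma_1^2$ of either sign. This is exactly why $P_1$ needed $q\le1$. Third derivatives are removed with the first-order condition $Q_i=0$.
\item[(iii)] The sign-indefinite term $|n+1-q|\,\sigma_1^2/\rho^2$ (after dividing by $\sigma_1$, it is of order $\sigma_1$) is dominated by $B\,\sigma_n^{ij}u_{ki}u_{kj}\gtrsim B\,\sigma_n\sum_i b_{ii}\gtrsim B\sigma_1$, using \eqref{eq:b-u} and $h\ge m_0$.
\item[(iv)] The term $A\,\sigma_n^{ii}h_{ii}=A(n\sigma_n-h\sum\sigma_n^{ii})$ supplies $-Am_0\sum\sigma_n^{ii}$, which absorbs the $C\sum\sigma_n^{ii}$ errors.
\end{itemize}
Choosing $B$ large relative to $|n+1-q|$ and $M_1$, and then $A$ large, gives $0\ge c\,\sigma_1-C$ at $\xi_0$. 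Hence $\sigma_1\le C$, with no evenness assumption used anywhere.
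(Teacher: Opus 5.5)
Your overall scheme matches the paper's: the same test function, determinant pinching, the lower bound $B\,b^{ij}u_{ki}u_{kj}\gtrsim B\sigma_1$, and absorption of errors by the $A$-term. There are, however, two genuine gaps.

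\textbf{Boundary case.} This case is left open, and the obstacle you describe comes from a sign inconsistency.
\begin{itemize}
\item Your step (iv) needs $A<0$. The term $A\,\mathcal L h=A(n-h\sum_i b^{ii})$ contributes a positive multiple of $\sum_i b^{ii}$ only when $A$ is negative.
\item With $A<0$, the boundary term $Ah_\mu=A\cot\theta\,h\leq A\cot\theta\,m_0$ has the favorable sign, and so does $B(|\nabla u|^2)_\mu=-2B\cot\theta|\nabla u|^2$. No $\ell$-type modification of $Q$ is needed. The paper takes $A$ very negative to exclude a boundary maximum.
\end{itemize}
What actually requires proof is an upper bound for $(\log\sigma_1)_\mu$, and your sketch of it is wrong. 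Tangential differentiation of the Robin condition gives $\nabla_\mu b_{\beta\beta}=\cot\theta\,(b_{\mu\mu}-b_{\beta\beta})$, which contains the unbounded term $\cot\theta\, b_{\mu\mu}$. It is not $-\cot\theta\, b_{\beta\beta}+C$. You then eliminate $\nabla_\mu b_{\mu\mu}$ through $F_\mu=b^{ij}\nabla_\mu b_{ij}$, where $F_\mu$ contains $(n+1-q)\cot\theta\,h\,b_{\mu\mu}/\rho^2$. This yields
\[
(\log\sigma_1)_\mu\leq C_0+\frac{b_{\mu\mu}^2}{\sigma_1}\Bigl(C_1-C_2\sum_\beta b^{\beta\beta}\Bigr),
\]
and the last term can be of order $\sigma_1$.

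The missing idea is a dichotomy. Either $\sum_\beta b^{\beta\beta}\geq C_1/C_2$, or every $b_{\beta\beta}\geq C_2/C_1$. In the second case $b_{\mu\mu}=\det b/\prod_\beta b_{\beta\beta}$ is bounded by the upper bound on $\det b$. Either way $(\log\sigma_1)_\mu\leq C$, and $A\ll0$ then forces $Q_\mu<0$. This dichotomy is also what is needed to justify the inequality $(\log\sigma_1)_\mu\leq -c\cot\theta+C/\sigma_1$ that you wrote down.

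\textbf{Interior case.} Step (iii) does not handle the terms that are the crux of dropping $q\leq 1$. Two terms of size $B\,|n+1-q|\,\sigma_1$ appear:
\begin{itemize}
\item When you remove third derivatives with $Q_i=0$, the piece $(n+1-q)\langle\nabla h,\nabla\sigma_1\rangle/(\rho^2\sigma_1)$ of $\Delta\log\det b/\sigma_1$ becomes $-(n+1-q)\rho^{-2}\bigl(A|\nabla h|^2+B\langle\nabla h,\nabla|\nabla u|^2\rangle\bigr)$.
\item $B\,\mathcal L|\nabla u|^2$ contains $\frac{2B}{\ell}\langle\nabla \log\det b,\nabla u\rangle$, which includes $\frac{B(n+1-q)}{\ell\rho^2}\langle\nabla u,\nabla\rho^2\rangle$.
\end{itemize}
For each, the ratio to the good term $2c_0B\sigma_1$ is independent of $B$ and need not be small. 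So neither can be absorbed by taking $B$ large. Your absorption works only for the $B$-independent term $|n+1-q|\,|\nabla^2h|^2/(\sigma_1\rho^2)$.

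The argument closes only because these two terms cancel exactly. Since $\nabla\rho^2=2b\nabla h$ and $b=\ell\nabla^2u+T$ with $T_{ij}=\ell_iu_j+\ell_ju_i+u\delta_{ij}$, symmetry gives
\[
\ell^{-1}\langle\nabla u,\nabla\rho^2\rangle-\langle\nabla h,\nabla|\nabla u|^2\rangle=\frac{2}{\ell}\langle\nabla h,T\nabla u\rangle .
\]
The right-hand side is bounded by the assumed $C^0$ and $C^1$ bounds. Without this cancellation, your final claim that choosing $B$ large relative to $|n+1-q|$ and $M_1$ gives $0\geq c\,\sigma_1-C$ is not justified.
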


\begin{proof}

The assumed bounds and \eqref{Mong-Eq} give constants $d_-,d_+>0$ such that
\begin{equation}\label{eq:conditional-det}
d_-\leq\det b=f h^{p-1}(h^2+|\nabla h|^2)^{(n+1-q)/2}\leq d_+.
\end{equation}
Thus it suffices to bound the trace of $b$ from above; the determinant
lower bound will then control its smallest eigenvalue.

For simplicity, we set
\[
F:=\log \det b=(p-1)\log h+G, \quad F^{ij}=\frac{\partial F}{\partial b_{ij}}=b^{ij},
\]
where $\{b^{ij}\}$ is the inverse of $\{b_{ij}\}$ and 
\[
 G:=\log f+\frac{n+1-q}{2}\log (h^2+|\nabla h|^2).
\]

Recall the auxiliary function
\begin{equation*}
Q=\log \sigma_1+Ah+B|\nabla u|^{2},
\end{equation*}
where $\sigma_1:=\sigma_{1}(b_{ij}(\xi))$ is the trace of the matrix $\{b_{ij}\}$ with $b_{ij}=h_{ij}+h\delta_{ij}$, $u=h/ \ell$, $A<0$ and $B>0$ are constants to be chosen later. Assume that $\max_{\mathcal{C}_{\theta}}Q(\xi)$ is attained at a point $\xi_{0}\in \mathcal{C}_{\theta}$.

{\bf Case 1:} $\xi_{0}\in \partial\mathcal{C}_{\theta}$. 
Choose an orthonormal frame $\{e_i\}^{n}_{i=1}$ around $\xi_{0}\in \partial\mathcal{C}_{\theta}$ such that $e_n=\mu$ at $\xi_{0}$ and $\{b_{ij}\}$ is diagonal at $\xi_{0}$. 
At $\xi_0$, the identities \eqref{eq:Neumann-identities} give

\begin{align}
\nabla_{\mu}Q&=\frac{(\sigma_1)_{\mu}}{\sigma_1}+Ah_{\mu}-2B\cot\theta\, |\nabla u|^2\notag\\
&= \frac{\nabla_{\mu}b_{\mu\mu}+\cot\theta \sum_{\beta}(b_{\mu\mu}-b_{\beta\beta})}{\sigma_1}+A\cot \theta\, h -2B\cot\theta \,|\nabla u|^2,\label{bou-1}
\end{align}
where we used
\begin{equation}
  h_{\mu\beta}=0\quad {\rm and} \quad \nabla_{\mu}b_{\beta\beta}=\cot \theta(b_{\mu\mu}-b_{\beta\beta})         \notag
\end{equation}
for all $1\leq \beta \leq n-1$. On the other hand,
\begin{align*}
F_{\mu}&=(\log \det b)_{\mu}=b^{ij}\nabla_{\mu}b_{ij}\\
&=b^{-1}_{\mu\mu}\nabla_{\mu}b_{\mu\mu}+\sum_{\beta}b^{-1}_{\beta\beta}\nabla_{\mu}b_{\beta\beta}\\
&=b^{-1}_{\mu\mu}\nabla_{\mu}b_{\mu\mu}+\cot \theta \sum_{\beta}b^{-1}_{\beta\beta}(b_{\mu\mu}-b_{\beta\beta}).
\end{align*}
From this, one sees
\[
\nabla_{\mu}b_{\mu\mu}=b_{\mu\mu}F_{\mu}-\cot \theta \, b^2_{\mu\mu}\sum_{\beta}b^{-1}_{\beta\beta}+(n-1)\cot \theta\, b_{\mu\mu}.
\]
This implies that
\begin{align*}
(\sigma_1)_{\mu}&=b_{\mu\mu}F_{\mu}-\cot \theta \, b^2_{\mu\mu}\sum_{\beta}b^{-1}_{\beta\beta}+2(n-1)\cot \theta\, b_{\mu\mu}-\cot \theta \sum_{\beta}b_{\beta\beta},
\end{align*}
On the boundary, $h_{\alpha\mu}=0$ and $h_\mu=\cot\theta\,h$ give
\[
(h^2+|\nabla h|^2)_\mu
=2h_\mu(h+h_{\mu\mu})=2\cot\theta\,h b_{\mu\mu}.
\]
Hence
\[
F_\mu=(p-1)\cot\theta+(\log f)_\mu
 +(n+1-q)\frac{\cot\theta\,h b_{\mu\mu}}{h^2+|\nabla h|^2}.
\]

Using the $C^0$ and $C^1$ estimates, \eqref{bou-1} and $b_{\mu\mu}/\sigma_1\leq 1$, we further obtain
\begin{equation*}
\nabla_{\mu}Q\leq C_0+\frac{b^2_{\mu\mu}}{\sigma_1}(C_1-C_2\sum_{\beta}b^{-1}_{\beta\beta})+A\cot \theta \, h.
\end{equation*}
If $\sum_{\beta}b^{-1}_{\beta\beta} \geq C_1/C_2$, then $\nabla_{\mu}Q(\xi_0)$ is strictly negative provided that $A\leq A_1<0$ is sufficiently negative. Otherwise, $\sum_{\beta}b^{-1}_{\beta\beta} \leq C_1/C_2$. Combining this with $b_{\beta\beta}\geq (\sum_{\beta}b^{-1}_{\beta\beta})^{-1}$ and the $C^0$ and $C^1$ estimates, we have
\begin{align*}
b_{\mu\mu}=\frac{\det b}{\Pi_{\beta}b_{\beta\beta}}\leq C_3\left(\frac{C_1}{C_2} \right)^{n-1}.
\end{align*}
Thus $\nabla_{\mu}Q(\xi_0)$ is also strictly negative provided that $A\leq A_2<0$ is sufficiently negative. This contradicts the assumption  that $\xi_{0}\in \partial\mathcal{C}_{\theta} $. Hence the maximum point of $Q$ must be attained at some point $\xi_{0}\in \mathcal{C}_{\theta} \backslash \partial\mathcal{C}_{\theta}$ if $A<0$ is sufficiently negative.

{\bf Case 2:} $\xi_{0}\in \mathcal{C}_{\theta} \backslash \partial\mathcal{C}_{\theta}$.  Choose an orthonormal frame $\{e_i\}^{n}_{i=1}$ around $\xi_{0}$ such that  $\{b_{ij}\}$ is diagonal at $\xi_0$.  Set
\[
P=F^{ij}u_{ki}u_{kj}.
\]
Recall the decomposition \eqref{eq:b-u}. Let $T_{ij}:=\ell_{i}u_{j}+\ell_{j}u_{i}+u\delta_{ij}$. The assumed $C^0$ and $C^1$ estimates imply that 
\begin{equation*}
|T_{ij}|\leq C.
\end{equation*}
Using \eqref{eq:b-u} and $(x-y)^2\geq \frac{x^2}{2}-y^2$, we find that at $\xi_0$,
 \begin{equation}
\begin{split}\label{PE}
P=\sum_{k,i}\frac{1}{b_{ii}}u^2_{ki}&=\sum_{k,i}\frac{1}{b_{ii}}\frac{(b_{ki}-T_{ki})^2}{\ell^2}\\
&\geq \sum_{k,i}\frac{1}{b_{ii}}\left(\frac{b^2_{ki}}{2\ell^2}-\frac{T^2_{ki}}{\ell^2} \right)\\
&=\frac{1}{2\ell^2}\sigma_1-C\sum_{i}b^{ii}\\
&\geq c_{0}\sigma_1-C\sum_i b^{ii},
\end{split}
\end{equation}
Here we used \eqref{eq:fixed-cap-identities}. Differentiating \eqref{eq:b-u} and applying the same identities gives
 \begin{equation}
\begin{split}\label{bijk}
b_{ijk}&=\ell u_{ijk}+\ell_k u_{ij}+\ell_{ik}u_j +\ell_i u_{jk} +\ell_{jk}u_{i}+\ell_j u_{ik}+u_{k}\delta_{ij}\\
&=\ell (u_{kij}-u_k \delta_{ij}+u_j\delta_{ki})+\ell_k u_{ij}+\ell_{ik}u_j +\ell_i u_{jk} +\ell_{jk}u_{i}+\ell_j u_{ik}+u_{k}\delta_{ij}\\
&=\ell u_{kij}+\ell_k u_{ij}+u_{j}\delta_{ik}+\ell_{i}u_{jk}+(1-\ell) u_i \delta_{jk}+\ell_j u_{ik}+(1-\ell)u_k \delta_{ij}.
\end{split}
\end{equation}
Since $F_k=b^{ij}b_{ijk}$, \eqref{bijk} implies
 \begin{equation}\label{RA}
\ell b^{ij}u_{kij}=F_{k}-\ell_k b^{ij}u_{ij}-2b^{ij}\ell_{i}u_{jk}+(\ell-1)u_k \sum_{i}b^{ii}-(2-\ell)b^{kj}u_{j}.
\end{equation}
For the remaining contraction, divide \eqref{eq:cofactor-uij}
by $\sigma_n$ and use $\sigma_n^{ij}=\sigma_n b^{ij}$.
With $\mathcal L=b^{ij}\nabla^2_{ij}$, this identity together with
\eqref{PE} and \eqref{RA} yields
\begin{equation}
\begin{split}\label{Luu}
\mathcal{L}(|\nabla u|^2)&=b^{ij}(2u_{k}u_{kij}+2u_{ki}u_{kj})\\
&=\frac{2}{\ell}\left(  \langle \nabla F, \nabla u\rangle-\langle \nabla \ell, \nabla u  \rangle b^{ij}u_{ij}-2b^{ij}\ell_i u_{jk}u_k \right)\\
&\quad+\frac{2(\ell-1)}{\ell}|\nabla u|^2\sum_i b^{ii}-\frac{2(2-\ell)}{\ell}\sum_i b^{ii}u^2_i+2b^{ij}u_{ki}u_{kj}\\
&=\frac{2}{\ell}\langle \nabla F, \nabla u\rangle-\frac{2}{\ell^2}\langle \nabla \ell, \nabla u\rangle\left( n-2b^{ij}\ell_i u_j-u\sum_i b^{ii}\right)\\
&\quad-\frac{4}{\ell^2}b^{ij}\ell_iu_{k}(b_{jk}-\ell_j u_k-\ell_k u_j-u\delta_{jk})\\
&\quad+\frac{2(\ell-1)}{\ell}|\nabla u|^2\sum_i b^{ii}-\frac{2(2-\ell)}{\ell}\sum_i b^{ii}u^2_i+2b^{ij}u_{ki}u_{kj}\\
&\geq 2c_0 \sigma_1+\frac{2}{\ell}\langle \nabla F, \nabla u\rangle-C\left(1+\sum_ib^{ii} \right).
\end{split}
\end{equation}
Since $b=\nabla^2h+hI$ is a Codazzi tensor on the unit sphere, the
contracted commutation identity is
\[
\Delta b_{ij}=\nabla^2_{ij}\sigma_1+nb_{ij}-\sigma_1\delta_{ij}.
\]
It also follows directly from
$\Delta h_{ij}=(\Delta h)_{ij}+2nh_{ij}-2(\Delta h)\delta_{ij}$.

Thus,
\begin{equation}\label{SIE}
\mathcal{L}\sigma_1=b^{ij}\Delta b_{ij}-n^2+\sigma_1\sum_i b^{ii}.
\end{equation}
Now differentiating $F=\log \det b$ twice, we have
\begin{equation}\label{FE}
\Delta F=b^{ij}\Delta b_{ij}-b^{ip}b^{qj}(\nabla_{k}b_{pq}\nabla_{k}b_{ij}).
\end{equation}
Substituting \eqref{FE} into \eqref{SIE}, at $\xi_0$, we obtain
\begin{equation*}
\mathcal{L}\sigma_1=\Delta F+\sigma_1 \sum_i b^{ii}-n^2+\sum_{i,j,k}b^{ii}b^{kk}(\nabla_{j}b_{ik})^2.
\end{equation*}
It follows that
\begin{equation}\label{SIE2}
\mathcal{L} \log\sigma_1=\frac{\mathcal{L} \sigma_1}{\sigma_1}-\frac{b^{ij}(\sigma_1)_i (\sigma_1)_j}{\sigma^2_1}\geq \frac{\Delta F}{\sigma_1}+\sum_i b^{ii}-\frac{n^2}{\sigma_1},
\end{equation}
For each $i$, the Codazzi equation and the Cauchy--Schwarz inequality imply 
\[
\sum_{j,k}b^{kk}(\nabla_jb_{ik})^2
\geq\sum_jb^{jj}(\nabla_i b_{jj})^2
\geq\frac{(\nabla_i\sigma_1)^2}{\sigma_1}.
\]
Multiplying by $b^{ii}/\sigma_1$ and summing in $i$ shows that the
third-derivative square controls the negative gradient square in
$\mathcal L\log\sigma_1$, proving \eqref{SIE2}.

Moreover, by a direct computation, 
\begin{equation}\label{LHE}
\mathcal{L} h=b^{ij}h_{ij}=b^{ij}(b_{ij}-h\delta_{ij})=n-h\sum_{i}b^{ii}.
\end{equation}
Recalling $F=(p-1)\log h +G$, we find
\[
\frac{\Delta \log h}{\sigma_1}=\frac{1}{h}-\frac{n}{\sigma_1}-\frac{|\nabla h|^2}{h^2\sigma_1}.
\]
So
\begin{equation}\label{LHE2}
\frac{\Delta F}{\sigma_1}=(p-1)\frac{\Delta \log h}{\sigma_1}+\frac{\Delta G}{\sigma_1}=(p-1)\left(\frac{1}{h}-\frac{n}{\sigma_1}-\frac{|\nabla h|^2}{h^2\sigma_1} \right)+\frac{\Delta G}{\sigma_1}.
\end{equation}
From \eqref{Luu}, \eqref{SIE2}, \eqref{LHE} and \eqref{LHE2}, one sees that at $\xi_0$,
\begin{equation}
\begin{split}\label{LQ2}
0&\geq \mathcal{L}Q=\mathcal{L}\log \sigma_1+A\mathcal{L}h+B\mathcal{L}|\nabla u|^2\\
&\geq (p-1)\left(\frac{1}{h}-\frac{n}{\sigma_1}-\frac{|\nabla h|^2}{h^2\sigma_1} \right)+\frac{\Delta G}{\sigma_1}+\sum_i b^{ii}-\frac{n^2}{\sigma_1}\\
&\quad+A(n-h\sum_{i}b^{ii})+B\left[  2c_0 \sigma_1+\frac{2}{\ell}\langle\nabla u, \nabla G \rangle+\frac{2(p-1)}{h\ell}\langle \nabla h, \nabla u\rangle-C\left(1+\sum_ib^{ii} \right)\right]\\
&\geq 2c_0 B \sigma_1+(-Ah-CB)\sum_{i}b^{ii}-C\left(1+|A|+B+\frac{1}{\sigma_1}\right)+\Phi,
\end{split}
\end{equation}
where
\[
\Phi:=\frac{\Delta G}{\sigma_1}+\frac{2B}{\ell}\langle \nabla u, \nabla G \rangle.
\]
At the maximum point of $Q$, 
\[
\frac{\nabla \sigma_1}{\sigma_1}=-A\nabla h-B\nabla(|\nabla u|^2).
\]
Using this fact and recalling $\rho^2=h^2+|\nabla h|^2$, we find
\begin{align*}
\Delta (\rho^2) &=2h\sigma_1-2nh^2+2|\nabla^2h|^2+2\langle \nabla h, \nabla \sigma_1\rangle,\\
\ \Delta G&=\Delta \log f+\frac{n+1-q}{2}\left(\frac{\Delta (\rho^2)}{\rho^2} -\frac{|\nabla(\rho^2)|^2}{\rho^4}\right),
\end{align*}
and
 \begin{equation*}
\begin{split}
\Phi&=\frac{\Delta (\log f)}{\sigma_1}+\frac{2B}{\ell}\langle \nabla u,\nabla \log f\rangle \\
&\quad +(n+1-q)\left(  \frac{h}{\rho^2}-\frac{nh^2}{\sigma_1 \rho^2}+\frac{|\nabla^2 h|^2}{\sigma_1 \rho^2}-A\frac{|\nabla h|^2}{\rho^2}\right)\\
&\quad -\frac{n+1-q}{2}\frac{|\nabla (\rho^2)|^2}{\sigma_1 \rho^4}+B\frac{n+1-q}{\rho^2}\chi,
\end{split}
\end{equation*}
where
\[
\chi=\ell^{-1}\langle \nabla u,\nabla (\rho^2)\rangle-\langle \nabla h,\nabla |\nabla u|^2 \rangle.
\]
The potentially unbounded Hessian terms in $\chi$ cancel exactly.
Since $\nabla (\rho^2)=2b\nabla h$ and $b=\ell\nabla^2u+T$, symmetry gives
\[
\chi=\frac2\ell\langle\nabla u,b\nabla h\rangle
      -2\langle\nabla h,\nabla^2u\nabla u\rangle
     =\frac2\ell\langle\nabla h,T\nabla u\rangle.
\]
The assumed $C^0,C^1$ bounds therefore imply $|\chi|\leq C$.

On the other hand, using the $C^0$ and $C^1$ estimates again, 
\[
|\nabla (\rho^2)|^2\leq C(1+|\nabla^2h|^2),
\]
and at $\xi_0$, one sees
\[
\frac{|\nabla^2 h|^2}{\sigma_1}=\frac{\sum_i(b_{ii}-h)^2}{\sigma_1}\leq \sigma_1+\frac{nh^2}{\sigma_1}.
\]

From the above, we further have
\begin{equation}\label{phi}
\Phi \geq -C_0 \sigma_1-C_1\left(1+|A|+B+\frac{1}{\sigma_1}\right).
\end{equation}
Here $C_0,C_1$ are independent of the choices of $A$ and $B$;
the factor $n+1-q$ has been estimated in absolute value. Similarly,
the terms containing $p-1$ in \eqref{LQ2} use only $|p-1|$.
Substituting \eqref{phi} into \eqref{LQ2}, at $\xi_0$ we obtain
\begin{equation*}
\begin{split}
0\geq (2c_0 B-C_0) \sigma_1+(-Ah-CB)\sum_{i}b^{ii}-C_1\left(1+|A|+B+\frac{1}{\sigma_1}\right).
\end{split}
 \end{equation*}
Choose $B>0$ sufficiently large so that $2c_0B= C_0+1$, and then choose $A<0$ sufficiently negative such that
\[
-A\min_{\mathcal{C}_{\theta}}h\geq CB+1,
\]
where $A$ also satisfies $A\leq A_1<0$ and $A \leq A_2 \leq 0$. Then at $\xi_0$,
\[
0\geq  e^{Q-Ah-B|\nabla u|^2}-C_{1}\frac{1}{e^{Q-Ah-B|\nabla u|^2}}-C_2\notag.
\]
Thus $Q(\xi_0)\leq C$. The boundedness of $Ah+B|\nabla u|^2$
and maximality of $Q(\xi_0)$ imply $\sigma_1\leq C$ everywhere.
Each eigenvalue of $b$ is at most $C$, and \eqref{eq:conditional-det}
then gives $\lambda_{\min}(b)\geq d_-C^{-(n-1)}$, proving the claim.
\end{proof}

For higher regularity, use the gnomonic coordinates preceding
\eqref{eq:gnomonic-Hessian}, with $\xi(y)=v(y)+\cos\theta\,e$.
By \eqref{eq:gnomonic-Hessian}, Lemma \ref{C2p} bounds $H$ in $C^2$
and bounds the eigenvalues of $D^2H$ above and away from zero.
Writing $N=y/R$ for the Euclidean outward unit normal
to $\partial B_R$, the Robin condition becomes $D_NH=H/R$. By
\eqref{eq:gnomonic-determinant}, the equation becomes
\begin{align*}
\log\det D^2H=&{}(n+2)\log a+\log f(\xi(y))+(p-1)\log(aH)
 \\
 &{}+\frac{n+1-q}{2}\log\bigl(|DH|^2+(H-y\cdot DH)^2\bigr).
\end{align*}
The right-hand side is uniformly Lipschitz along solutions.
Choose fixed $0<\lambda_0<\Lambda_0$ so that the Hessians $D^2H$
lie in the interior of
$\mathcal K_0=\{A:\lambda_0 I\leq A\leq\Lambda_0 I\}$, and define
\[
\widehat F(M)=\inf_{A\in\mathcal K_0}
\bigl\{\log\det A+\operatorname{tr}(A^{-1}(M-A))\bigr\}.
\]
Concavity of $\log\det$ shows that $\widehat F(M)=\log\det M$
for $M\in\mathcal K_0$. As an infimum of affine functions,
$\widehat F$ is concave on all symmetric matrices. Moreover, for $P\geq0$,
\[
\Lambda_0^{-1}\operatorname{tr}P
\leq\widehat F(M+P)-\widehat F(M)
\leq\lambda_0^{-1}\operatorname{tr}P.
\]
Thus the equation has a globally uniformly elliptic concave extension
with constants controlled by Lemma \ref{C2p}.
The boundary operator is smooth and uniformly oblique. Thus
\cite[Thm.~1.3]{LZ18}, applied in its concave form by changing the signs
of the unknown and operator, and interior Evans--Krylov regularity give
a global $C^{2,\alpha_0}$ bound, $0<\alpha_0<1$; the boundary zero-order
term is allowed in this local estimate. Differentiation and oblique
Schauder estimates \cite{Lie13} then give, for $k\geq0$ and $0<\alpha<1$,
\begin{equation}\label{higher}
\|h\|_{C^{k+2,\alpha}(\mathcal{C}_{\theta})}\leq C_{k,\alpha}.
\end{equation}
The constants depend on the corresponding smooth norms of $f$ and
are uniform for compact parameter families with the stated bounds.

We now prove Theorem \ref{maintheo2} (i) by a degree-theoretic
argument. Fix $1<p<q<\infty$ and choose
\[
1<p_0<\min\{p,n+1\},\qquad q_{\max}=\max\{q,n+1\},\qquad
\gamma=\frac{p_0-1}{q_{\max}-1}\in(0,1).
\]
In the first stage set
\[
f_s=(1-s)\ell^{1-p_0}+sf,\qquad
q_s=(1-s)(n+1)+sq,\qquad 0\leq s\leq1,
\]
and keep $p_0$ fixed. Then
\begin{equation}\label{eq:uniform-path-margins}
2(p_0-1)-\gamma(q_s-1)\geq p_0-1>0,\qquad
q_s-p_0\geq\min\{q,n+1\}-p_0>0.
\end{equation}
In the second stage, keep $q,f$ fixed and let
$p_t=(1-t)p_0+tp$, $0\leq t\leq1$. Here
\[
2(p_t-1)-\gamma(q-1)\geq p_0-1>0,\qquad q-p_t\geq q-p>0.
\]
The uniform positive and smooth data bounds, together with the
two parameter margins, allow us to apply Lemmas \ref{gra-esi},
\ref{TREQ}, \ref{C0p}, \ref{C2p} and \eqref{higher} uniformly along both paths.

Choose $0<\alpha<1$, constants $m,a_0>0$ strictly smaller than the
uniform lower bounds for $h,b$, and $R_0$ strictly larger than the
uniform $C^{4,\alpha}$ bound. The same bounded open set
\[
\mathcal O=\{h\in C^{4,\alpha}_{\rm even}(\mathcal{C}_{\theta}):
h>m,\ \nabla^2h+hI>a_0I,\ \|h\|_{C^{4,\alpha}}<R_0\}
\]
works for both paths. The first elliptic-oblique pair is
\begin{equation}\label{Mong-Eqs}
\begin{split}
F_s(h)&=\det(\nabla^2h+hI)
       -f_sh^{p_0-1}(h^2+|\nabla h|^2)^{(n+1-q_s)/2},\\
G(h)&=h_\mu-\cot\theta\,h.
\end{split}
\end{equation}
For the second path, define $\widetilde F_t$ by \eqref{Mong-Eqs}
with $(p_0,q_s,f_s)$ replaced by $(p_t,q,f)$; keep the same boundary operator.
Any zero on $\overline{\mathcal O}$ is positive and strictly convex;
the a priori estimates place it in $\mathcal O$, excluding boundary zeros.
The pairs are uniformly elliptic-oblique on the closure, and the uniform
$C^5$ bound makes their solution sets compact in $C^{4,\alpha}$.

At the beginning of the first path, $h_0=\ell$ solves the equation
because $b(\ell)=I$. It is the unique positive even strictly convex
solution by \cite[Thm.~5.4, $k=n$, $l=0$]{MWW25}; see also
\cite[Sec.~6]{HI25}. Its linearized pair is
\begin{equation}\label{eq:linearization-base}
\phi\longmapsto\left(\Delta\phi+n\phi-\frac{p_0-1}{\ell}\phi,
\quad\phi_\mu-\cot\theta\,\phi\right).
\end{equation}
For a kernel element, write $\phi=\ell v$. Then
\[
\operatorname{div}(\ell^2\nabla v)+(n+1-p_0)\ell v=0,
\qquad v_\mu=0.
\]
Integration and testing with $v$ give
\[
\int\ell v=0,\qquad
\int\ell^2|\nabla v|^2=(n+1-p_0)\int\ell v^2.
\]
For $\phi=\ell v$, we have $\int\phi=0$, so the mixed-volume
inequality used in the proof of \cite[Lem.~5.6]{MWW25} gives
$\int\phi(\Delta\phi+n\phi)\leq0$. Integration by parts, using
$v_\mu=0$ and $\Delta\ell+n\ell=n$, identifies $\int\phi(\Delta\phi+n\phi)$ with
$n\int\ell v^2-\int\ell^2|\nabla v|^2$. Substitution of the previous energy identity gives
\[
0\geq\int\phi(\Delta\phi+n\phi)
=(p_0-1)\int\ell v^2.
\]
Since $p_0>1$ and $\ell>0$, this forces $v=0$.
After the substitution $\phi=\ell v$, a homotopy of lower-order terms to
$(\Delta-1,\partial_\mu)$ gives Fredholm index zero, also on the even
subspace since the operators commute with reflection. Thus the
linearized pair is invertible.

Use Lemma \ref{lem:realization-strict} to identify the cap with a
smooth ball. Replacing $h$ in real-power factors by a smooth positive
function equal to $h$ for $h>m/2$ extends the operators without changing
them near $\overline{\mathcal O}$.
The degree in \cite[Thm.~1 and Cor.~2.1]{LLN17} restricts to even
functions since the nonlinear and auxiliary linear operators commute
with reflection. The unique nondegenerate base zero has local degree
$\pm1$, which equals the degree on $\mathcal O$ by excision.
Since $F_1=\widetilde F_0$, homotopy invariance gives
\[
\deg((\widetilde F_1,G),\mathcal O,0)
=\deg((\widetilde F_0,G),\mathcal O,0)
=\deg((F_1,G),\mathcal O,0)
=\deg((F_0,G),\mathcal O,0)=\pm1.
\]
Thus the target problem has a positive, even, smooth, strictly convex
solution, proving (i).

\subsection{Proof of Theorem \ref{maintheo2} (ii)}
\begin{lem}\label{C01}
Let $\theta\in (0,\frac{\pi}{2})$ and $p>q$ be fixed real numbers. A positive,
smooth, strictly convex solution of \eqref{Mong-Eq} satisfies
the bounds \eqref{TRPA}, with $C$ depending on
$n$, $\theta$, $p$, $q$, $\min f$ and $\max f$.
No evenness is required.
\end{lem}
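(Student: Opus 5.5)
The plan is to obtain the two-sided bounds on $h$ directly from the maximum principle applied to $u=h/\ell$, exactly as in the derivation of \eqref{PQU}--\eqref{PQL} in the proof of Lemma \ref{C0p}, and then to bound $|\nabla h|$ geometrically via \eqref{eq:geometric-gradient-bound}. The key observation is that \eqref{PQU}--\eqref{PQL} were derived only from positivity and admissibility, with no evenness and no restriction on $p,q$. So I will reuse them with the sign of $q-p$ now negative.

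\textbf{Step 1 (extremum comparison).} At a maximum point $\xi_1$ of $u$ on the closed cap, the boundary observation after \eqref{eq:Neumann-identities} gives $\nabla u=0$ and $\nabla^2u\le0$. Hence \eqref{eq:b-u} yields $b\le uI$, so $\det b\le u^n$. Also $h=u\ell$ and $\nabla h=u\nabla\ell$ there, so $h^2+|\nabla h|^2=u^2(\ell^2+|\nabla\ell|^2)$. The equation then reads $u^n\ge \det b=a(\xi_1)u^{n+p-q}$, with $a$ as in \eqref{eq:extremum-weight}. This gives $(\max u)^{q-p}\ge a_-$, that is, $(\max u)^{p-q}\le 1/a_-$. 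Since $p>q$, this is an \emph{upper} bound on $\max u$. Symmetrically, at a minimum $b\ge uI$ gives $(\min u)^{p-q}\ge 1/a_+$, a \emph{lower} bound on $\min u$.

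\textbf{Step 2 (dependence of constants).} I need $a_\pm$ to be controlled by $n,\theta,\min f,\max f,p,q$. This follows from \eqref{eq:fixed-cap-identities}: $1-\cos\theta\le\ell\le\sin^2\theta$, and $\ell^2+|\nabla\ell|^2$ is bounded above and below by constants depending only on $\theta$. The latter holds because $\ell^2+|\nabla\ell|^2=|X|^2$ for the cap $\mathcal C_\theta$, and $X(\xi)=\xi$ is a unit vector; hence $\ell^2+|\nabla\ell|^2=1$. The real powers $p-1$ and $(n+1-q)/2$ of these positive quantities are therefore harmless for either sign. Converting back with $h=u\ell$ gives $C^{-1}\le h\le C$.

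\textbf{Step 3 (gradient bound).} I will apply Lemma \ref{lem:realization-strict}, which requires no evenness: $|\nabla h|\le\sqrt{1+\csc^2\theta}\,\max h\le C$. This completes \eqref{TRPA}, and only $\min f$ and $\max f$ enter, with no derivatives of $f$.

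The only delicate point is Step 1 at boundary extrema, namely that $\nabla^2u\le0$ holds in the full tangent space of the closed cap. This is exactly the content of the remark after \eqref{eq:Neumann-identities}: $u_\mu=0$ and $u_{\alpha\mu}=-\cot\theta\,u_\alpha=0$ at a critical point. I will cite that remark rather than reprove it. The reduction from $b\le uI$ to $\det b\le u^n$ uses that $b>0$ (strict convexity), by monotonicity of the determinant on positive definite matrices.
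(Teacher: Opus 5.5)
Your proposal is correct and is essentially the paper's proof. Both reuse the extremum comparisons \eqref{PQU}--\eqref{PQL}, now with $q-p<0$, to get $\max u\le a_-^{-1/(p-q)}$ and $\min u\ge a_+^{-1/(p-q)}$, and both then invoke \eqref{eq:geometric-gradient-bound} for the gradient, with no evenness used.

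One justification in Step 2 is wrong, though harmless. The point $X(\xi)=\xi$ lies on the unit sphere centered at $\cos\theta\,e$, not on a sphere centered at the origin. So $\ell^2+|\nabla\ell|^2=|\xi|^2\in[(1-\cos\theta)^2,\sin^2\theta]$ and is not identically $1$; at the pole, for example, $\nabla\ell=0$ and $\ell=1-\cos\theta$. These two-sided $\theta$-dependent bounds are all your argument actually needs, so the conclusion stands.
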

\begin{proof}
Use $u=h/\ell$ and the weight \eqref{eq:extremum-weight}.
The comparisons \eqref{PQU}--\eqref{PQL} apply without evenness.
Since $p-q>0$, they give
\[
\max u\leq a_-^{-1/(p-q)},\qquad
\min u\geq a_+^{-1/(p-q)}.
\]
The fixed bounds for $\ell$ and \eqref{eq:geometric-gradient-bound}
give \eqref{TRPA}, uniformly along the positive-data homotopy below.
\end{proof}

Lemmas \ref{C01}, \ref{C2p} and \eqref{higher} give the a priori
estimates for fixed $p>q$, without evenness. For the continuity method,
consider
\[
f_s=(1-s)f_0+sf,\qquad
f_0=\ell^{1-p}(\ell^2+|\nabla\ell|^2)^{(q-n-1)/2}.
\]
The initial solution is $h=\ell$, and the uniform estimates give
closedness. For openness, put
$w=\log h$ and $\mathcal A(w)=\nabla^2w+\nabla w\otimes\nabla w+I$.
For any real $p,q$ and positive data $g$, equation \eqref{Mong-Eq} is equivalent to
\begin{equation}\label{eq:logarithmic-PDE}
\begin{aligned}
\log\det\mathcal A(w)-\frac{n+1-q}{2}\log(1+|\nabla w|^2)
 +(q-p)w&=\log g,\\
w_\mu&=\cot\theta,
\end{aligned}
\end{equation}
with $\mathcal A(w)>0$. Along the present path, take $g=f_s$.
Its linearization has positive-definite principal coefficients,
zero-order coefficient $q-p<0$, and homogeneous Neumann boundary
condition. The maximum principle and boundary point lemma give a
trivial kernel; the Fredholm index-zero property gives invertibility.
The continuity method therefore reaches $s=1$. Uniqueness follows
from the next lemma.

\begin{lem}\label{Uniq}
Let $p>q$ and $\theta\in (0,\frac{\pi}{2})$. Then a positive smooth, strictly convex solution of Eq. \eqref{Mong-Eq} is unique.

\end{lem}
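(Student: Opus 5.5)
We argue by a ratio maximum-principle comparison. Let $h_1,h_2$ be two positive, smooth, strictly convex solutions of \eqref{Mong-Eq} and pass to $w_i=\log h_i$, which satisfy the logarithmic form \eqref{eq:logarithmic-PDE} with the same $g=f$:
\[
\log\det\mathcal A(w_i)-\tfrac{n+1-q}{2}\log(1+|\nabla w_i|^2)+(q-p)w_i=\log f,\qquad (w_i)_\mu=\cot\theta .
\]
Set $\psi=w_1-w_2=\log(h_1/h_2)$. Then $\psi_\mu=0$ on $\partial\mathcal C_\theta$, because both Robin conditions give the same constant normal derivative. We aim to show $\max\psi\le0$; by symmetry $\min\psi\ge0$ follows, so $h_1=h_2$.

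Let $\xi_0$ be a maximum point of $\psi$ on the closed cap. If $\xi_0$ is interior, then $\nabla\psi=0$ and $\nabla^2\psi\le0$. If $\xi_0\in\partial\mathcal C_\theta$, the Neumann condition $\psi_\mu=0$ together with the tangential maximum condition again gives $\nabla\psi(\xi_0)=0$. Moreover, the Hessian is still negative semidefinite there. To see this, differentiate $\psi_\mu=0$ tangentially: the mixed entries $\psi_{\alpha\mu}$ equal a curvature term times $\psi_\alpha$, which vanishes. The one-sided normal second derivative is $\le0$, as in the observation after \eqref{eq:Neumann-identities}.

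At $\xi_0$ we therefore have $\nabla w_1=\nabla w_2$ and $\nabla^2w_1\le\nabla^2w_2$. This gives $\mathcal A(w_1)\le\mathcal A(w_2)$, both positive definite, and hence $\log\det\mathcal A(w_1)\le\log\det\mathcal A(w_2)$. The gradient terms $\log(1+|\nabla w_i|^2)$ coincide. Subtracting the two equations yields $(q-p)\psi(\xi_0)\ge0$, and since $q-p<0$ we get $\psi(\xi_0)\le0$. Hence $h_1\le h_2$ everywhere. The same argument at a minimum, with the inequalities reversed, gives $h_1\ge h_2$.

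The main obstacle is the boundary case. One must check that at a boundary maximum the full Hessian of $\psi$ is $\le0$, and not merely its tangential part. For this we would use $\psi_\mu\equiv0$ along the boundary: tangential differentiation gives $\psi_{\alpha\mu}=-\cot\theta\,\psi_\alpha=0$ at $\xi_0$, by the same computation as \eqref{eq:Neumann-identities}. The normal second derivative is handled by the one-sided Taylor expansion in the inward direction, since $\psi_\mu=0$ there. If this step proved delicate, the fallback is the Hopf lemma. Suppose $\max\psi>0$. Near $\xi_0$ the difference of the two equations is a linear elliptic inequality for $\psi$, with zero-order coefficient $q-p<0$, on the region where $\psi>0$. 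Hopf's lemma then forces $\psi_\mu>0$ at a strict boundary maximum, contradicting $\psi_\mu=0$.
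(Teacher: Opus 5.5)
Your proof is correct and is essentially the paper's argument: pass to $w_i=\log h_i$, use the logarithmic form \eqref{eq:logarithmic-PDE}, and at a maximum of $w_1-w_2$ use equal gradients and $\mathcal A(w_1)\le\mathcal A(w_2)$ to get $(q-p)\psi(\xi_0)\ge0$. The boundary case is handled by the same Neumann-extremum observation the paper invokes (vanishing mixed derivatives and a one-sided normal second derivative), so your Hopf-lemma fallback is not needed.
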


\begin{proof}
Let $h_1$ and $h_2$ be two positive admissible solutions of
\eqref{Mong-Eq}. Define $w_i=\log h_i$ and $\Phi=w_1-w_2$.
The boundary conditions imply $\Phi_\mu=0$. Assume that $\Phi$ attains
its maximum at $\xi_0\in\mathcal{C}_\theta$. By the boundary-extremum observation following \eqref{eq:Neumann-identities}, at $\xi_0$ we have
equal gradients and $\mathcal A(w_1)\leq\mathcal A(w_2)$.
Applying \eqref{eq:logarithmic-PDE} to the two solutions, with $g=f$,
we obtain
\[
(q-p)\Phi
=\log\det\mathcal A(w_2)-\log\det\mathcal A(w_1)\geq0.
\]
Since $p>q$, we obtain $\max\Phi\leq0$. Interchanging $h_1$ and
$h_2$, we also have $\min\Phi\geq0$. It follows that $h_1\equiv h_2$.
The proof is complete.
\end{proof}

\subsection{Proof of Theorem \ref{maintheo2} (iii)}

Fix $p=q>1$ and positive smooth even $f$. For $0<\varepsilon<1$,
let $h_\varepsilon$ be the solution of \eqref{Mong-Eq} from part (ii)
with parameters $(p+\varepsilon,p)$ and data $f$. Uniqueness and
reflection make it even. Put $u_\varepsilon=h_\varepsilon/\ell$.
The weight \eqref{eq:extremum-weight} is uniformly bounded above and
away from zero at $(p+\varepsilon,p,f)$. Thus \eqref{PQU}--\eqref{PQL},
with exponent difference $-\varepsilon$, bound $u_\varepsilon^{-\varepsilon}$.
For $m_\varepsilon:=\min h_\varepsilon$ and
$C_\varepsilon:=m_\varepsilon^\varepsilon$, the fixed bounds on $\ell$ imply $0<c\leq C_\varepsilon\leq C$, independently of $\varepsilon$.

Normalize $\bar h_\varepsilon=h_\varepsilon/m_\varepsilon$. Homogeneity gives
\begin{equation}\label{Mong-Eq-var-1}
\left\{\begin{aligned}
\det(\nabla^2\bar h_\varepsilon+\bar h_\varepsilon I)
 &=C_\varepsilon f\bar h_\varepsilon^{p-1+\varepsilon}
       (\bar h_\varepsilon^2+|\nabla\bar h_\varepsilon|^2)^{(n+1-p)/2},\\
(\bar h_\varepsilon)_\mu&=\cot\theta\,\bar h_\varepsilon.
\end{aligned}\right.
\end{equation}
Apply Lemma \ref{gra-esi} with exponents $(p+\varepsilon,p)$ and
the fixed choice $\gamma=1$. Its margin is
$2(p+\varepsilon-1)-(p-1)=p-1+2\varepsilon\geq p-1>0$.
The gradient constant is uniform, since $C_\varepsilon f$ has uniform
positive and smooth norm bounds. Lemma \ref{TREQ} and the normalization
$\min\bar h_\varepsilon=1$ yield
\begin{equation}\label{haver}
1\leq\bar h_\varepsilon\leq C,\qquad
|\nabla\bar h_\varepsilon|\leq C.
\end{equation}
By Lemma \ref{C2p}, \eqref{higher}, \eqref{haver} and the bounds for $C_\varepsilon$, a subsequence satisfies $C_{\varepsilon_j}\to C^*>0$ and $\bar h_{\varepsilon_j}\to h$ in $C^{2,\alpha'}$, $0<\alpha'<\alpha<1$, as $\varepsilon_j\downarrow0$. Since $\|\bar h_\varepsilon^\varepsilon-1\|_{C^0} \leq C^\varepsilon-1\to0$, passing to the limit in \eqref{Mong-Eq-var-1} yields \eqref{p=q}. The limit is positive and even; the uniform lower eigenvalue bound preserves strict convexity, and regularity gives smoothness.

We prove both uniqueness assertions. Let $(h_i,C_i^*)$, $i=1,2$,
solve \eqref{p=q}. Put $w_i=\log h_i$,
$A_i=\mathcal A(w_i)>0$ and $\alpha_p=(n+1-p)/2$.
They satisfy \eqref{eq:logarithmic-PDE} with $q=p$ and $g=C_i^*f$.
At a maximum of $w_1-w_2$ on the closed cap, the boundary-extremum
argument in Lemma \ref{Uniq} gives equal gradients and $A_1\leq A_2$.
Hence $C_1^*\leq C_2^*$. At a minimum the reverse inequality holds,
so $C_1^*=C_2^*$.

For this common constant, set $\varphi=w_1-w_2$ and define
\[
A_t=(1-t)A_2+tA_1,\qquad
g_t=(1-t)\nabla w_2+t\nabla w_1,\qquad
a^{ij}=\int_0^1(A_t^{-1})^{ij}\,dt.
\]
Every $A_t$ is positive definite. Subtracting the two equations and
using the fundamental theorem of calculus for the matrix logarithmic
determinant and the gradient term gives
\[
a^{ij}\varphi_{ij}+b^i\varphi_i=0,\qquad \varphi_\mu=0,
\]
where
\[
b^i=a^{ij}\bigl((w_1)_j+(w_2)_j\bigr)
       -2\alpha_p\int_0^1\frac{(g_t)_i}{1+|g_t|^2}\,dt.
\]
The sum of the gradients in this coefficient follows from symmetry of
$a^{ij}$ and the identity
\[
a^{ij}\bigl((w_1)_i(w_1)_j-(w_2)_i(w_2)_j\bigr)
=a^{ij}\bigl((w_1)_j+(w_2)_j\bigr)\varphi_i.
\]
These coefficients are bounded and $a^{ij}$ is uniformly positive
definite on the compact cap. The strong maximum principle and boundary point lemma force $\varphi$ to be constant. Thus $h_1=c h_2$ for some $c>0$; fixing $\min h=1$ gives equality. This proves (iii).

\section*{Acknowledgment} We would like to thank Mohammad N. Ivaki for his helpful comments and discussions on this work.

{\bf Conflict of interest:} The authors declare that they have no conflict of interest.

{\bf Data availability:} Data sharing was not applicable to this article and no datasets were generated or analyzed in this study.


\begin{thebibliography}{20}

\bibitem[Ale38]{A38}
A. D. Aleksandrov,
\textit{On the theory of mixed volumes. III. Extensions of two theorems of Minkowski on convex polyhedra to arbitrary convex bodies},
Mat. Sb. \textbf{3}(1938): 27--46.

\bibitem[Ale39]{A39}
A. D. Aleksandrov,
\textit{On the surface area measure of convex bodies},
Mat. Sb. \textbf{6}(1939): 167--174.


\bibitem[CH25]{CH25}
C. Cabezas-Moreno, J. Hu, \textit{The $L_p$ dual Christoffel-Minkowski problem for $1< p< q\leq k+1$ with $1\leq k\leq n$}, Calc. Var. Partial Differential Equations {\bf 64} (2025), no.~7, Paper No. 229, 29 pp.





\bibitem[CY76]{CY76}
S. Y. Cheng, S. T. Yau,
\textit{On the regularity of the solution of the $n$-dimensional Minkowski problem},
Comm. Pure Appl. Math. \textbf{29}(1976): 495--516.

\bibitem[DGLL26]{DGLL26}
 S. Ding, J. Gao, G. Li, M. Liu, \textit{The Anisotropic Capillary $L_p$-Minkowski Problem}, arXiv:2603.01066 (2026).



\bibitem[Gao26a]{G26a}
  Y. Gao, \textit{The $L_p$ dual Minkowski problem for capillary hypersurfaces}, arXiv:2510.12804v2 (2026).

\bibitem[Gao26b]{G26b}
  Y. Gao, \textit{The capillary $L_p$ dual Minkowski problem for $p>q$ in higher dimensions}, arXiv:2609.07158 (2026).

\bibitem[GG02]{GG02}
B. Guan, P. Guan, \textit{Convex hypersurfaces of prescribed curvatures}, Ann. of Math. (2) {\bf 156} (2002), no.~2, 655--673.

\bibitem[Gua23]{Gu23}
P. Guan, \textit{A weighted gradient estimate for solutions of $L^p$ Christoffel-Minkowski problem}, Math. Eng. {\bf 5} (2023), no.~3, Paper No. 067, 14 pp.



\bibitem[GM03]{GM03}
P. Guan, X.-N. Ma, \textit{The Christoffel-Minkowski problem. I. Convexity of solutions of a Hessian equation}, Invent. Math. {\bf 151} (2003), no.~3, 553--577.

\bibitem[HLYZ10]{HLYZ10}
C. Haberl, E. Lutwak, D. Yang, G. Zhang, \textit{The even Orlicz Minkowski problem}, Adv. Math. {\bf 224} (2010), no.~6, 2485--2510.



\bibitem[HHI25]{HHI25}
J. Hu, Y. Hu, M.~N. Ivaki, \textit{Capillary $L_p$ Minkowski Flows}, arXiv:2509.06110 (2025).

\bibitem[HI26]{HI26}
Y. Hu, M.~N. Ivaki, \textit{Capillary $L_p$-curvature problem}, arXiv:2602.21832 (2026).

\bibitem[HI26a]{HI26a}
Y. Hu, M.~N. Ivaki, Personal communication (2026).

\bibitem[HI25]{HI25}
Y. Hu, M.~N. Ivaki, \textit{Capillary $L_p$-Christoffel-Minkowski problem}, arXiv:2512.15464 (2025).


\bibitem[HI25a]{HI25a}
Y. Hu, M.~N. Ivaki, \textit{Capillary curvature images}, arXiv:2505.12921 (2025).

\bibitem[HI24]{HI24}
Y. Hu, M.~N. Ivaki, \textit{Prescribed $L_p$ curvature problem}, Adv. Math. {\bf 442} (2024), Paper No. 109566, 15 pp.


\bibitem[HIS25]{HIS25}
Y. Hu,  M.~N. Ivaki, J.  Scheuer, \textit{Capillary Christoffel-Minkowski problem}, arXiv:2504.09320 (2025).
.

\bibitem[HWYZ24]{HWYZ24}
Y. Hu, Y. Wei, B. Yang, T. Zhou,
\textit{A complete family of Alexandrov-Fenchel inequalities for convex capillary hypersurfaces in the half-space},
Math. Ann. \textbf{390}(2024): 3039--3075.


\bibitem[HLYZ16]{HLYZ16}
Y. Huang, E. Lutwak, D. Yang, G. Zhang, \textit{Geometric measures in the dual Brunn-Minkowski theory and their associated Minkowski problems}, Acta Math. {\bf 216} (2016), no.~2, 325--388.


\bibitem[HZ18]{HZ18}
Y. Huang, Y. Zhao, \textit{On the $L_p$ dual Minkowski problem}, Adv. Math. {\bf 332} (2018), 57--84.


\bibitem[LZ18]{LZ18}
D. Li, K. Zhang, \textit{Regularity for fully nonlinear elliptic equations with oblique boundary conditions}, Arch. Ration. Mech. Anal. {\bf 228} (2018), no.~3, 923--967.


\bibitem[LL26]{LL26}
G. Li, C. Liu, \textit{Capillary Orlicz-Minkowski flow in the upper half-space}, arXiv:2601.14659 (2026).

\bibitem[LLN17]{LLN17}
Y.~Y. Li, J. Liu, L. Nguyen, \textit{A degree theory for second order nonlinear elliptic operators with nonlinear oblique boundary conditions}, J. Fixed Point Theory Appl. {\bf 19} (2017), no.~1, 853--876.

\bibitem[Lie13]{Lie13}
G.~M. Lieberman, {\it Oblique derivative problems for elliptic equations}, World Sci. Publ., Hackensack, NJ, 2013.


\bibitem[Lut93]{L93}
E. Lutwak, \textit{The Brunn-Minkowski-Firey theory. I. Mixed volumes and the Minkowski problem}, J. Differential Geom. {\bf 38} (1993), no.~1, 131--150.

\bibitem[LYZ18]{LYZ18}
E. Lutwak, D. Yang,  G. Zhang, \textit{$L_p$ dual curvature measures}, Adv. Math. {\bf 329} (2018), 85--132.

\bibitem[KLS25]{KLS25}
W. Klingenberg, B. Lambert, J. Scheuer,
\textit{A capillary problem for spacelike mean curvature flow in a cone of Minkowski space},
J. Evol. Equ. \textbf{25}(2025): 15, 24 pp.

\bibitem[MWW25]{MWW25}
X. Mei, G. Wang, L. Weng, \textit{Prescribed $L_{p}$ curvature problem for convex capillary hypersurface}, arXiv:2512.16686 (2025).

\bibitem[MWW25a]{MWW25a}
X. Mei, G. Wang, L. Weng,
\textit{The capillary Minkowski problem},
Adv. Math. \textbf{469}(2025): 110230, 29 pp.

  \bibitem[MWW25b]{MWW25b}
 X. Mei, G. Wang, L. Weng, \textit{The capillary $L_p$ Minkowski problem}, arXiv:2505.07746 (2025), Trans. Amer. Math. Soc. (in press).

\bibitem[MWW25c]{MWW25c}
X. Mei, G. Wang, L. Weng,
\textit{The capillary Christoffel-Minkowski problem}, Calc. Var. Partial Differential Equations {\bf 65} (2026), no.~6, Paper No. 186, 22 pp.


\bibitem[MWW24]{MWW24}
X. Mei, G. Wang, L. Weng, \textit{Prescribed $L_{p}$ quotient curvature problem and related eigenvalue problem}, arxiv:2402.12314 (2024).

\bibitem[MWWX25]{MWWX25}
X. Mei, G. Wang, L. Weng, C. Xia,
\textit{Alexandrov-Fenchel inequalities for convex hypersurfaces in the half-space with capillary boundary II},
Math. Z. \textbf{310}(2025): 71, 17 pp.

\bibitem[Min97]{M897}
H. Minkowski,
\textit{Allgemeine Lehrs\"atze \"{u}ber die convexen Polyeder},
Nachr. Ges. Wiss. G\"ottingen (1897): 198--219.

\bibitem[Min03]{M903}
H. Minkowski,
\textit{Volumen und Oberfl\"ache},
Math. Ann. \textbf{57}(1903): 447--495.

\bibitem[Nir53]{N53}
L. Nirenberg,
\textit{The Weyl and Minkowski problems in differential geometry in the large},
Comm. Pure Appl. Math. \textbf{6}(1953): 337--394.

\bibitem[Pog78]{P78}
A. Pogorelov,
\textit{The Minkowski multidimensional problem},
translated by V. Oliker, Scripta Series in Mathematics, Winston, Washington, DC, 1978.

\bibitem[Sch14]{S14}
R. Schneider, {\it Convex bodies: the Brunn-Minkowski theory}, second expanded edition, Encyclopedia of Mathematics and its Applications, 151, Cambridge University Press, Cambridge, 2014.

\bibitem[WWX24]{WWX24}
G. Wang, L. Weng, C. Xia,
\textit{Alexandrov-Fenchel inequalities for convex hypersurfaces in the half-space with capillary boundary},
Math. Ann. \textbf{388}(2024): 2121--2154.

\bibitem[WZ25]{WZ25}
X. Wang, B. Zhu, \textit{The capillary Orlicz-Minkowski problem}, J. Funct. Anal. {\bf 291} (2026), no.~8, Paper No. 111574, 25 pp.

\end{thebibliography}
\end{document}